\numberwithin{equation}{section}
\theoremstyle{plain}
\newtheorem{thm}{Theorem}[section]
\newtheorem{lemma}{Lemma}[section]
\def\rank{{\rm rank}}
\def\trace{\mathop{\rm tr}}
\begin{document}

\begin{frontmatter}
\title{Wald Statistics in high-dimensional PCA}

%\runtitle{Statistical Inference in high-dimensional PCA}
%\thankstext{T1}{Footnote to the title with the ``thankstext'' command.}
	\runtitle{Wald Statistics in high-dimensional PCA}
\begin{aug}
%\author{\fnms{First} \snm{Author}\thanksref{t1,t2,m1}\ead[label=e1]{first@somewhere.com}},
%\author{\fnms{Second} \snm{Author}\thanksref{t3,m1,m2}\ead[label=e2]{second@somewhere.com}}
%\and
%\author{\fnms{Third} \snm{Author}\thanksref{t1,m2}
%\ead[label=e3]{third@somewhere.com}
%\ead[label=u1,url]{http://www.foo.com}}
%
%\thankstext{t1}{Some comment}
%\thankstext{t2}{First supporter of the project}
%\thankstext{t3}{Second supporter of the project}
%\runauthor{F. Author et al.}
%
%\affiliation{Some University\thanksmark{m1} and Another University\thanksmark{m2}}
%
%\address{Address of the First and Second authors\\
%Usually a few lines long\\
%\printead{e1}\\
%\phantom{E-mail:\ }\printead*{e2}}
%
%\address{Address of the Third author\\
%Usually a few lines long\\
%Usually a few lines long\\
%\printead{e3}\\
%\printead{u1}}

\author{\fnms{Matthias} \snm{L\"offler}}%,e3},
	\ead[label=e1]{m.loffler@statslab.cam.ac.uk}
%\and
%\author{\fnms{Richard} \snm{Nickl}\thanksref{d,e4}\ead[label=e4, mark]{r.nickl@statslab.cam.ac.uk}}

\address{Statistical Laboratory, 
	Centre for Mathematical Sciences, University of Cambridge, Wilberforce Road,
	CB3 0WB Cambridge,
	United Kingdom.
	\printead{e1}}
\end{aug}

\begin{abstract}
In this note we consider PCA for Gaussian observations $X_1,\dots, X_n$ with covariance $\Sigma=\sum_i \lambda_i P_i$ in the 'effective rank' setting with model complexity governed by $\mathbf{r}(\Sigma):=\trace(\Sigma)/\| \Sigma \|$.  \\
We prove a Berry-Essen type bound for a  Wald Statistic of the spectral projector $\hat P_r$. This can be used to construct non-asymptotic confidence ellipsoids and tests for spectral projectors $P_r$. Using higher order pertubation theory we are able to show that our Theorem remains valid even when $\mathbf{r}(\Sigma) \gg \sqrt{n}$. 
%Particularly, our proofs show how to adjust 'classic' techniques which often only work in the regime $r(\Sigma):=\trace(\Sigma)/\| \Sigma \|=o(n^{1/2})$ to the high-dimensional setting $ n \gg r(\Sigma) \gg n^{1/2}$. 
%If $r(\Sigma)=o(n^{1/2})$ one can expect that standard techniques from semi-parametric statistics work well. For the more challenging regime $n \gg r(\Sigma) \gg n^{1/2}$ we show how to adjust those techniques. 
%Building upon the work of \cite{KoltchinskiiLouniciPCAAHP} and \cite{KoltchinskiiLouniciPCAAOS} we consider statistical inference and efficiency for spectral projectors $P_r$ and eigenvectors $\theta_r$. +
%For the proof of our results we calculate the Fisher information for $P_r$. We prove semi-parametric efficiency for de-biased estimators of spectral projectors and eigenvectors. Moreover, we prove a Berry-Essen type bound for a normalised Wald statistic of $\hat P_r-P_r$. This enables the construction of tests and confidence sets for $P_r$. Finally, using van Tree's inequality we show that our estimators for $\langle P_r u, v \rangle$ and $\langle \theta_r, u \rangle$ are minimax optimal,  exactly matching our lower bound. 

\end{abstract}

%\begin{keyword}[class=MSC]
%\kwd[Primary ]{60K35}
%\kwd{60K35}
%\kwd[; secondary ]{60K35}
%\end{keyword}

\begin{keyword}
\kwd{PCA, Spectral projectors, Central Limit Theorem, confidence sets}
\end{keyword}

\end{frontmatter}

\section{Introduction}
Principal Component Analysis (PCA) is a widely used dimension-reduction technique in statistics. In the traditional Gaussian setting going back to Anderson \cite{Anderson1963} one observes $n$ $i.i.d.$ zero mean Gaussian random variables with covariance matrix $\Sigma$. In more recent years mainly three working assumptions on $\Sigma$ have been considered: The \emph{spiked covariance model}, the \emph{spiked sparse covariance model} and the \emph{'effective rank'} setting.  \\
Johnstone \cite{JohnstoneAos2001} introduced the spiked covariance model in which $\Sigma$ is given by
\begin{equation} \label{Spiked covariance model}
\Sigma = \sum_{i=1}^l s_i \theta_i \theta_i^T + \sigma^2 I_p.
\end{equation}
Subsequent work in this model has mainly been focused on the failures of PCA in high-dimensions when the dimension $p \rightarrow \infty$ and $p/n \rightarrow const.$ \cite{JohnstoneLu, PaulStatisticaSinica, Nadler2008, WangFan}. \\
A remedy is to assume that the leading eigenvectors $\theta_i$ in \eqref{Spiked covariance model} are sparse, enabling thus inference even when $p/n \rightarrow \infty$ \cite{CaiMaWu2013, GaoZhou, VuLei2013, BerthetRiggolet, WangBerthetSamworth}. \\
We will consider the effective rank setting \cite{ReissWahl, KoltchinskiiLofflerNickl, KoltchinskiiLouniciPCAAHP, KoltchinskiiLouniciPCAAOS, KoltchinskiiLouniciPCABernoulli, NaumovSpokoinyUlyanov}. Here no assumptions on the particular structure of $\Sigma$ are made, except that the effective rank  $\mathbf{r}(\Sigma):=\trace(\Sigma)/\| \Sigma \|=o(n)$ where  $\trace(\cdot)$ denotes the trace and $\| \cdot \|$ the operator norm. This allows for a wider range of models, for example $\Sigma$ with a polynomial or exponential decay of the eigenvalues \cite{ReissWahl}. \\
Rates of convergence and limiting results for empirical spectral projectors and empirical eigenvectors were proven in \cite{KoltchinskiiLouniciPCAAHP}. Building upon this \cite{KoltchinskiiLofflerNickl} proved that a de-biased empirical eigenvector attains the semi-parametric efficiency bound. \\
A method for constructing Frobenius type confidence sets for $ P_r$ was given in the two papers \cite{KoltchinskiiLouniciPCAAOS} and \cite{KoltchinskiiLouniciPCAarxiv}. However, their approach requires sample splitting into three samples and the assumption that $\| \Sigma \|_F^2 \rightarrow \infty$. The latter is not necessarily fulfilled, for example in a decaying eigenvector model where the eigenvalues $\lambda_i$ scale as  $i^{-\alpha}$, $\alpha > 1/2$. Others approaches based on the bootstrap and Bayesian inference, respectively, were proposed by \cite{NaumovSpokoinyUlyanov} and \cite{SilinSpokoiny} but require at least  $\mathbf{r}(\Sigma)=o(n^{1/3})$ and do not deal with the harder case $\mathbf{r}(\Sigma) \gg \sqrt{n}$ where one needs to account for bias. \\ \\
In this note we present a third possibility based on Wald Statistics for constructing a Frobenius type confidence set for $P_r$. We show that when $p \rightarrow \infty$ a further normalized Wald statistic of $\hat P_r-P_r$ is asymptotically Gaussian. \\ % which does not require sample splitting and still works in the high-dimensional regime $\mathbf{r}(\Sigma) \gg n$. \\ \\
Our main contribution is that we show how to deal with the critical case $\mathbf{r}(\Sigma) \gg \sqrt{n}$ by using second order pertubation theory, requiring for example in the spiked covariance model \eqref{Spiked covariance model} that $\mathbf{r}(\Sigma) =o(n^{2/3})$. 
	
	% Contrary to the construction in \cite{KoltchinskiiLouniciPCAarxiv} our approach does not require sample splitting or that $\| \Sigma \|_F^2 \rightarrow \infty$.  Instead we need to assume that the smallest eigenvalue of $\Sigma$ %, $\lambda_{\min}$, 
	%is bounded from below. %This resembles  $\beta_{\min}$-conditions on the minimal non-zero entry of the regressor in sparse regression (e.g. \cite{vdgBuhlmannZhou}) and is required for consistent estimation of the Fisher-information. 
\subsection{Set-up \& notation} \label{section prelim}
%We assume the same set-up and similar notation as Koltchinskii and Lounici in  \cite{KoltchinskiiLouniciPCAAHP, KoltchinskiiLouniciPCAAOS, KoltchinskiiLouniciPCAarxiv}. 
For matrices $A,B,C$ we define the Kronecker product as $(A \otimes B)C=ACB^T$ and the Frobenius inner product $\langle A, B \rangle:= \trace(A^TB).$  $\| \cdot \|_F$ denotes the Frobenius-norm.
%Moreover, for $A,B, C \in \mathbb{H} \otimes \mathbb{H}$ we define the Kronecker-product \begin{equation}
%(A \times B)C:=ACB^*.
%\end{equation} \label{Def: Kronecker product} 
The notation $\| \cdot \|$ will be used for the operator norm and in slight abuse of notation for the Euclidean norm of vectors with corresponding Euclidean inner product $\| v \|:=\langle v, v \rangle:=v^Tv$. \\% and $\| \cdot \|_*$ the trace-norm.  
We will frequently use the following convention throughout the paper: for nonnegative $a,b$ $a \lesssim b$ means that there exists a constant $C$ not depending on $n$ or $\textbf{r}(\Sigma)$
 such that $a \leq Cb$. \\  We assume that $X_1,...X_n$ are $i.i.d$ centred Gaussian vectors in $\mathbb{R}^p$ with $\mathbb{E} \| X \|^2 < \infty$. We denote by $\Sigma=\mathbb{E}X_1 X_1^T$ the covariance matrix of the observations $X_1, \dots, X_n$ and we denote the empirical covariance matrix by
\begin{equation}
\hat \Sigma= :\frac{1}{n} \sum_{j=1}^n X_j X_j^T.
\end{equation}
We define the \emph{effective rank}: $\textbf{r}(\Sigma):=\trace (\Sigma)/\|\Sigma \|.$ \\
Since $\Sigma$ is symmetric and positive semidefinite it has spectral decomposition $\Sigma=\sum_{s} \lambda_s P_s$ where $\lambda_s$ are distinct strictly positive, descending eigenvalues and $P_s$ are the corresponding spectral projectors. Let $\mu_j$ denote the eigenvalues of $\Sigma$ arranged in a non-increasing order and repeated with their multiplicities. Define $\Delta_r:=\{j: \mu_j =\lambda_r\}$. 
As described (and proven in Lemma 2.2.) in \cite{KoltchinskiiLofflerNickl} it suffices to assume that $\Delta_r$ is known as $\Delta_r$ can be identified on an event of high probability under our assumptions. \\
We thus denote by $\hat P_r$ the spectral projector corresponding to the eigenvalues $\{ \mu_j(\hat \Sigma), j \in \Delta_r\}$ and by $\hat \lambda_r$ one arbitrary chosen eigenvalue from the same set.  %If $\| \hat \Sigma - \Sigma \| < \frac{\bar g_r}{4}$, then the eigenvalues $\{ \mu_j(\hat \Sigma), j \in \Delta_r \}$ lie in an interval of radius $\bar g_r/4$ around $\lambda_r$ and all other eigenvalues of $\hat \Sigma$ are at least $3 \bar g_r/4$ from $\lambda_r$ away. Thus, for each $r$ there is a cluster of eigenvalues $\{ \mu_j(\hat \Sigma),~ j \in \Delta_r\}$ and the corresponding spectral projector $\hat P_r$ is a natural estimator for $P_r$ with $\hat m_r=m_r$. 
We denote by $\bar g_r:=\min(\lambda_{r-1}-\lambda_r, \lambda_r-\lambda_{r+1})$ the spectral gap of the eigenvalue $\lambda_r$ with the convention that $\lambda_0=\infty$.
By $\lambda_{\min}$ we denote the smallest eigenvalue of $\Sigma$ and likewise by $\hat {\lambda}_{\min}$ the smallest eigenvalue of $\hat \Sigma$. 
If $\text{card}(\Delta_r):=m_r=1$ we define $\hat \theta_r$ to be a sample eigenvector belonging to the eigenvalue $\hat \lambda_r$. \\ %We denote by $m_r$ the multiplicity of the eigenvalue $\lambda_r$. We denote by $\hat P_r$ the $r$'th spectral projector of the sample covariance matrix $\hat \Sigma$ and likewise, by $\hat \theta_r$ the corresponding sample eigenvector. $\hat \lambda_r$ denotes the $r$'th largest empirical eigenvalue of $\hat \Sigma$.  \\
\section{Main result}
Wald statistics \cite{Wald} are commonly used when the dimension of the parameter space is $p=const.$ %Using standard theory for the MLE and the delta method one obtains if the rank of $P_r$ is $1$ and $p=const.$ that
\noindent The Fisher information for the model $X\sim N(0,\Sigma)$ is ${\mathbb  I}(\Sigma)
=\frac{1}{2}(\Sigma^{-1}\otimes \Sigma^{-1})$ (e.g. \cite{Eaton}). If $p$ is constant the maximum likelihood estimator $\hat \Sigma$ for $n$ i.i.d. observations is asymptotically Gaussian distributed with $\sqrt{n}$-rate and limiting covariance 
${\mathbb I}(\Sigma)^{-1}=2(\Sigma \otimes \Sigma).$ 
Applying the delta method to  
$g(\Sigma):= P_r$ shows that $g(\hat \Sigma)$ is asymptotically Gaussian, too, and has limiting covariance 
\begin{equation} \label{Waldparametric}
 \mathbb{I}(P_r)^{-1}:=2 \sum_{s \neq r} \left ( P_s \otimes P_r + P_r \otimes   P_s  \right) \frac{\lambda_r \lambda_s}{(\lambda_r-\lambda_s)^2} = 2(\Sigma C_r^2 \otimes  P_r \Sigma+ \Sigma  P_r \otimes  C_r^2 \Sigma),
\end{equation}
where we define the resolvent operator $C_r:=\sum_{s \neq r} \frac{P_s}{\lambda_r-\lambda_s}$. \\
If $p$ remains fixed and $\rank(P_r)=m_r$ it follows that the Wald statistic below converges to a $\chi^2$-distributed random variable,  $$n \| \mathbb{I}(P_r)^{1/2} (\hat P_r - P_r) \|_F^2 \overset{d}{\rightarrow} \chi^2_{m_r(p-m_r)},$$
where we denote
\begin{equation}
\mathbb{I}(P_r)^{1/2}%=\frac{1}{\sqrt{2}} \sum_{s \neq r} (P_s \otimes P_r + P_r \otimes P_s) \frac{\lambda_r-\lambda_s}{\sqrt{\lambda_r \lambda_s}}
=\frac{1}{\sqrt{2}} \left ( \Sigma^{-1/2} C_r^{-1} \otimes P_r \Sigma^{-1/2} + \Sigma^{-1/2} P_r \otimes C_r^{-1} \Sigma^{-1/2} \right )
\end{equation}
and, slightly abusing notation, $C_r^{-1}:=\lambda_r I - \Sigma.$ \\
\noindent 
In the high-dimensional regime with $p \rightarrow \infty$ the test statistic above is stochastically unbounded and thus the $\chi^2$-approximation becomes invalid. Hence one has to further normalize, eventually obtaining a Gaussian limit instead. Moreover, higher order terms do not simply vanish anymore as $n \rightarrow \infty$. Particularly, when applying $\mathbb{I}(P_r)^{1/2}$ to $ ( \hat{P}_r-P_r)$ one has to multiply with $\Sigma^{-1/2}$ and thus the size of higher order error terms depends on the smallest eigenvalue of $\Sigma$ which we denote by $\lambda_{\min}$.
%The following Lemma shows that this intuition is justified in the growing dimension setting with $p \rightarrow \infty$, too, and quantifies the size of the error term.
\begin{lemma} \label{Lemma: Wald Statistic deterministic}
	Suppose that $\textbf{r}(\Sigma)=o(n)$ and that $\mathbb{E} \| \hat \Sigma-\Sigma \| \leq (1-\gamma)\bar g_r/2$ for some $\gamma \in (0,1)$.   %and that $\lambda_{\min} \geq c \mathbf{r}(\Sigma)/\sqrt{np}$. %Then, for a constant $C=C(\bar g_r, m_r, \| \Sigma \|, \lambda_r)$ and for every $t \geq 1$ we have with probability at least $1-e^{-t}$ that,
%	\begin{align}
%& \left |  \| \mathbb{I}_1^{1/2}(\hat P_r - P_r) \|_F^2 -  \|\mathbb{I}_1^{1/2} L_r\|_F^2- \mathbb{E} \left ( \| \mathbb{I}_1^{1/2} (\hat P_r - P_r) \|_F^2 -  \|\mathbb{I}_1^{1/2} L_r \|_F^2 \right ) \right |   \notag \\
%\leq & \frac{C(\bar g_r, m_r, \| \Sigma \|, \lambda_r)}{\lambda_{\min}}  \left ( \frac{\textbf{r}(\Sigma)}{n} \bigvee \frac{t}{n} \bigvee \frac{t^2}{n^2}\right ) \sqrt{\frac{t}{n}}.
%	\end{align}
For $g_i$ denoting $i.i.d. $ standard Gaussian random variables we have that, 
\begin{align} \frac{n  \| \mathbb{I}(P_r)^{1/2}(\hat P_r - P_r) \|_F^2-m_r(p-m_r)}{\sqrt{2m_r(p-m_r)}} \overset{d}{=} \frac{\sum_{i=1}^{m_r(p-m_r)} (g_i^2-1)}{\sqrt{2m_r(p-m_r)}} + Z_n,
\end{align}
where $Z_n$ fulfills with probability at least $1-e^{-t}$ for every $1 \leq t \leq n$ 
\begin{align} 
|Z_n| \leq & \notag C(\gamma, \bar g_r, m_r, \| \Sigma \|, \lambda_r )  \cdot \\ & \left (\frac{t}{\sqrt{n}} \bigvee \frac{(\textbf{r}(\Sigma) \vee t) \sqrt{t}}{\lambda_{\min}\sqrt{np}} \bigvee \sqrt{\frac{p}{n}}\sqrt{t} \bigvee \frac{t^{3/2}}{\sqrt{np}}  \bigvee \frac{\sqrt{p}\mathbf{r}(\Sigma)}{n} \bigvee \frac{\mathbf{r}(\Sigma )^2}{n\sqrt{p} \lambda_{\min}}\right ). \label{Errorterm}
\end{align}
\end{lemma}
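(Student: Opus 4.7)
The plan is to expand $\hat P_r - P_r$ via second-order perturbation theory, isolate a pure Gaussian chaos from the linear term, and absorb everything else into $Z_n$. Set $E:=\hat\Sigma-\Sigma$. Since $\mathbb{E}\|E\|\le(1-\gamma)\bar g_r/2$, standard concentration for sample covariance operators combined with Markov's inequality gives $\|E\|\le(1-\gamma/2)\bar g_r/2$ with probability at least $1-e^{-t}$ for every $1\le t\le n$. On that event the Riesz contour formula for $\hat P_r$ yields the convergent expansion
$$\hat P_r - P_r = L_r(E) + S_r^{(2)}(E) + R_r,\qquad L_r(E):=-P_r E C_r - C_r E P_r,$$
with $S_r^{(2)}(E)$ the explicit quadratic Kato term (a finite sum of monomials $P_s E P_t E P_u$ weighted by inverse eigenvalue gaps) and $\|R_r\|_F\lesssim\|E\|^3/\bar g_r^3$. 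Substituting and expanding the squared Frobenius norm splits the statistic as $T_0+2T_1+T_{\mathrm{rem}}$, where $T_0:=n\|\mathbb{I}(P_r)^{1/2}L_r(E)\|_F^2$, $T_1:=n\langle\mathbb{I}(P_r)^{1/2}L_r(E),\mathbb{I}(P_r)^{1/2}S_r^{(2)}(E)\rangle$, and $T_{\mathrm{rem}}$ groups the $\|S_r^{(2)}\|_F^2$, the cross terms with $R_r$, and the $\|R_r\|_F^2$ contributions.

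For the leading term, write $X_k=\Sigma^{1/2}Z_k$ with $Z_k\sim N(0,I_p)$ iid. Using the identities $C_r^{-1}C_r=I-P_r$ and $P_r\Sigma^{-1/2}=\lambda_r^{-1/2}P_r$, a direct computation gives
$$\mathbb{I}(P_r)^{1/2}L_r(E)=-\tfrac{\lambda_r^{-1/2}}{\sqrt 2}\bigl(\Sigma^{-1/2}(I-P_r)E P_r+P_r E\Sigma^{-1/2}(I-P_r)\bigr),$$
and diagonalising in the eigenbasis of $\Sigma$ yields
$$T_0=\sum_{j\notin\Delta_r,\,l\in\Delta_r}\Bigl(\tfrac{1}{\sqrt n}\sum_{k=1}^n Z_{kj}Z_{kl}\Bigr)^2=\|S_n\|^2,$$
where $S_n:=n^{-1/2}\sum_{k=1}^n V_k$ with $V_k\in\mathbb{R}^{m_r(p-m_r)}$ having components $Z_{kj}Z_{kl}$, mean zero and, by Isserlis, identity covariance. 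A multivariate Berry--Esseen / Lindeberg-type coupling (or direct Hanson--Wright control of $\|S_n\|^2$) produces iid standard Gaussians $g_i$ with $T_0-m_r(p-m_r)=\sum_i(g_i^2-1)+\Delta_0$, where $|\Delta_0|$ (after dividing by $\sqrt{2m_r(p-m_r)}$) accounts for the $t/\sqrt n$, $\sqrt{p/n}\sqrt t$, and $t^{3/2}/\sqrt{np}$ terms in \eqref{Errorterm}.

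The hardest step is $T_1$. Because $\mathbb{I}(P_r)^{1/2}$ carries a factor $\Sigma^{-1/2}$, remainder bounds involve $\lambda_{\min}^{-1}$, producing the $(\mathbf{r}(\Sigma)\vee t)\sqrt t/(\lambda_{\min}\sqrt{np})$ contribution via $\|R_r\|_F\lesssim\|E\|^3/\bar g_r^3$ together with $\|E\|\lesssim\|\Sigma\|\sqrt{(\mathbf{r}(\Sigma)\vee t)/n}$. More seriously, $S_r^{(2)}(E)$ has non-vanishing expectation of order $\|\Sigma\|\mathbf{r}(\Sigma)/(n\bar g_r)$, so the naive bound on $T_1$ is already of order $\sqrt p\,\mathbf{r}(\Sigma)/n$, saturating the permissible error precisely in the regime $\mathbf{r}(\Sigma)\gg\sqrt n$. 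The strategy is to split $S_r^{(2)}(E)=\mathbb{E}S_r^{(2)}(E)+\tilde S_r$: the cross term with the deterministic part $\mathbb{E}S_r^{(2)}(E)$ becomes a mean-zero \emph{linear} functional of $E$ controlled by Gaussian concentration (yielding the $\sqrt p\,\mathbf{r}(\Sigma)/n$ term in $Z_n$), while the centred chaos $\tilde S_r$ is handled by Hanson--Wright inequalities for Gaussian chaoses of degree up to four and, after accounting for the $\lambda_{\min}^{-1/2}$ factor from $\mathbb{I}(P_r)^{1/2}$, produces the $\mathbf{r}(\Sigma)^2/(n\sqrt p\,\lambda_{\min})$ term. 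Dividing throughout by $\sqrt{2m_r(p-m_r)}$ then gives the bound \eqref{Errorterm}.
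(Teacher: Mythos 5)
Your high-level decomposition into a linear term, a quadratic Kato term, and a cubic remainder is the same as the paper's ($L_r$, $Z_r$, $R_r$), and handling the leading quadratic form $T_0$ via a Gaussian-comparison/coupling argument matches what the paper imports from Koltchinskii--Lounici. The genuine gap is in your treatment of the cross term $T_1$. You split $S_r^{(2)}(E)=\mathbb{E}S_r^{(2)}(E)+\tilde S_r$ and attribute the critical error $\sqrt{p}\,\mathbf{r}(\Sigma)/n$ to the \emph{fluctuation} of the mean-zero linear functional $\langle \mathbb{I}(P_r)^{1/2}L_r(E),\mathbb{I}(P_r)^{1/2}\mathbb{E}S_r^{(2)}(E)\rangle$, and you propose to handle the centred chaos part by Hanson--Wright. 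But Hanson--Wright controls \emph{deviations around the mean}; it cannot see that $\mathbb{E}\langle \mathbb{I}(P_r)^{1/2}L_r,\mathbb{I}(P_r)^{1/2}\tilde S_r\rangle\neq 0$. Since $\mathbb{E}L_r(E)=0$, the piece $\langle L_r,\mathbb{E}S_r^{(2)}\rangle$ has mean zero (and in fact $\mathbb{I}(P_r)^{1/2}\mathbb{E}Z_r(E)$ essentially vanishes because $C_r^{-1}P_r=P_rC_r^{-1}=0$ kills each monomial after applying $\mathbb{I}(P_r)^{1/2}$), so the entire non-trivial contribution sits in the correlation $\mathbb{E}\langle L_r,\tilde S_r\rangle$, a degree-six Gaussian chaos mean that must be computed explicitly.

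This is exactly what the paper does and what you are missing: it evaluates $\mathbb{E}\langle \mathbb{I}(P_r)^{1/2}L_r,\mathbb{I}(P_r)^{1/2}Z_r\rangle$ exactly via Isserlis/Wick calculus, exploiting that $\langle X_1,\theta_j\rangle$ for $j\in\Delta_r$ is independent of $P_r^\perp X_1$, and obtains the sharp bias of order $m_r\,p\,\mathbf{r}(\Sigma)/(\bar g_r n^2)$, which after multiplying by $n$ and dividing by $\sqrt{2m_r(p-m_r)}$ yields the $\sqrt{p}\,\mathbf{r}(\Sigma)/n$ term. The $\mathbf{r}(\Sigma)^2/(n\sqrt{p}\,\lambda_{\min})$ term you ascribe to the centred chaos in fact comes from $\mathbb{E}\|\mathbb{I}(P_r)^{1/2}S_r\|_F^2$, and the residual cross term $\langle L_r, R_r\rangle$ is what is bounded by Cauchy--Schwarz. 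Without the explicit moment computation of $\mathbb{E}\langle L_r, S_r\rangle$ --- the step the paper flags as ``naive Cauchy--Schwarz does not suffice'' --- your argument does not close in the regime $\mathbf{r}(\Sigma)\gg\sqrt n$, which is the whole point of the lemma. (Minor: the cross chaos is degree six in the underlying Gaussians, not ``up to four.'')
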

%\textbf{Remark:} The constant $C'$ depends on $\| \Sigma \|$ and $\bar g_r$ and thus requires that these quantities are bounded, an assumption which is not given for example in the factor model type PCA described by \cite{WangFan}. \\ \\
%Note that in particular this implies that $n \mathbb{E} \| \mathbb{I}_1^{1/2} (\hat P_r - P_r) \|_F^2 = m_r(p-m_r)+ O_\mathbb{P}\left (  \frac{p}{n^{1/2}} \bigvee \frac{r(\Sigma)}{n^{3/2} \lambda_{\min} } \right )$. 
%Lemma \eqref{Lemma: Wald Statistic deterministic} can be used to prove a non-asymptotic Berry-Essen bound for the test-statistic $$\frac{n \left (\| \mathbb{I}^{1/2}_1(\hat P_r - P_r) \|_F^2 - \mathbb{E} \| \mathbb{I}^{1/2}_1(\hat P_r - P_r) \|_F^2 \right )}{\sqrt{2m_r(p-m_r)}}$$ as long as one imposes the assumption that $$\frac{\textbf{r}(\Sigma) \bigvee \log(n)}{\lambda_{\min}} \sqrt{\frac{\log(n)}{np}}=o(1).$$
%For example, in the spiked covariance model $\textbf{r}(\Sigma) \asymp p$ and $\lambda_{\min}$ is bounded from below and hence the condition above is fulfilled if $\textbf{r}(\Sigma)=o(n/\log(n))$. \\ 
%However, 
The bounds on $| \mathbb{E}\| \mathbb{I}(P_r)^{1/2}(\hat P_r - P_r)\|_F^2-m_r(p-m_r)|$  obtained in the proof of Lemma \ref{Lemma: Wald Statistic deterministic}  are sharp in their dependence on $p$ and $\mathbf{r}(\Sigma)$. Particularly this implies that without a further de-biasing step akin to the procedure in \cite{KoltchinskiiLouniciPCAarxiv} it is impossible to improve the dependence on $p$ and $\mathbf{r}(\Sigma)$ in \eqref{Errorterm}. \\ \\
In principle Lemma \ref{Lemma: Wald Statistic deterministic} could be used to construct confidence sets and tests for $P_r$. However, in statistical applications $\mathbb{I}(P_r)$ is usually not known and one needs to estimate it from the data. \\ 
For this we use the plug-in estimator given by %$\hat m_r$ and $\mathbb{I}_1^{1/2}$ by, 
\begin{align} \label{Def: Empirical Fisher Information}
\hat{\mathbb{I}}(\hat P_r)^{1/2}&:=%\frac{1}{\sqrt{2}} \sum_{s \neq r} \frac{\hat \lambda_r - \hat \lambda_s^{(n)}}{\sqrt{\hat \lambda_s^{(n)} \hat \lambda_r^{(n)}}} (\hat P_s\square \hat P_r^{(n)} + \hat P_r^{(n)} \square \hat P_s^{(n)})  \notag\\ & ~ =
\frac{1}{\sqrt{2}} \left [ \hat \Sigma^{-1/2} \hat C_r^{-1} \otimes  \hat P_r \hat \Sigma^{-1/2}+\hat \Sigma^{-1/2}\hat P_r\otimes \hat C_r^{-1} \hat \Sigma^{-1/2} \right ],
\end{align}
where we denote $\hat C_r^{-1}=\hat \lambda_r I - \hat \Sigma.$
To be able to consistenly estimate $\lambda_{\min }$ we need to assume that it is of larger order than $\| \hat \Sigma - \Sigma \| \asymp \sqrt{ {\bf r}(\Sigma)/n}$. 
Our main result is then following Berry-Essen type Theorem: \\
\begin{thm} \label{Thm fisher normalization}
	Suppose that $\textbf{r}(\Sigma)=o(n)$, that $\mathbb{E} \| \hat \Sigma-\Sigma \| \leq (1-\gamma)\bar g_r/2$ for some $\gamma \in (0,1)$, that $\bar g_r > \bar c$ for some constant $\bar c > 0$ large enough and that for another large enough constant $c>0$
	\begin{equation} \label{lambda min cond}
\lambda_{\min } \geq c \sqrt{\frac{{\bf{r}}(\Sigma) \bigvee \log(p)}{n}}.
	\end{equation} %$$ %\left (\frac{\textbf{r}(\Sigma)^2 \bigvee \log(n)^2}{n \sqrt{p}  \lambda_{\min}^3} \bigvee \textcolor{red}{\frac{ \textbf{r}(\Sigma)}{\sqrt{n \lambda_{\min}}}} \bigvee {\frac{\textbf{r}(\Sigma)}{\lambda_{\min}}} \sqrt{\frac{\log(n)}{{np} }} \right ) =o(1).  $$
	Then, for $\Phi$ denoting the distribution function of a standard Gaussian random variable we have that%we have for a constant $C$ depending on $\gamma,\bar g_r, m_r, \lambda_r$ and $ \| \Sigma \|$ that 
	\begin{align}
	& \sup_{x \in \mathbb{R}} \left | \mathbb{P} \left ( \frac{n \| \hat{\mathbb{I}}(\hat P_r)^{1/2} (\hat P_r- P_r )\|_F^2 -  m_r (p- m_r)}{\sqrt{2  m_r (p- m_r)}} \leq x  \right )-\Phi(x) \right | \notag \\ \leq & C(\gamma, \bar g_r, m_r, \| \Sigma \|, \lambda_r) \bigg [ \frac{1}{\sqrt{p}} +  \frac{\textbf{r}(\Sigma)^2 \vee \log(p)^2}{n \sqrt{p}  \lambda_{\min}^3} + \sqrt{\frac{p \log(p)}{n}}\notag \\ & + \frac{\left (\mathbf{r}(\Sigma) \vee \log(p) \right ) \sqrt{\log(p)}}{\lambda_{\min} \sqrt{np}} \notag  +
	 \frac{ \sqrt{p}\textbf{r}(\Sigma)}{n} %+\frac{\log(p)^{3/2}}{\sqrt{np}} 
	 \bigg ] .
	\end{align}
\end{thm}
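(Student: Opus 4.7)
The plan is to reduce everything to Lemma \ref{Lemma: Wald Statistic deterministic} by combining three ingredients: (i) the classical Berry-Essen bound for sums of i.i.d.\ centred $\chi^2_1$ variables, (ii) the probabilistic remainder bound in Lemma \ref{Lemma: Wald Statistic deterministic} with $t = \log(p)$, and (iii) a perturbation analysis replacing the plug-in Fisher information $\hat{\mathbb{I}}(\hat P_r)$ with the true one $\mathbb{I}(P_r)$. Writing $T_n$ for the statistic obtained using $\mathbb{I}(P_r)^{1/2}$ and $W_n$ for the target statistic using $\hat{\mathbb{I}}(\hat P_r)^{1/2}$, I first split
\begin{align*}
\sup_x |\mathbb{P}(W_n \leq x) - \Phi(x)| \leq \sup_x |\mathbb{P}(T_n \leq x) - \Phi(x)| + \sup_x |\mathbb{P}(W_n \leq x) - \mathbb{P}(T_n \leq x)|
\end{align*}
and treat the two contributions separately.

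For the first term, Lemma \ref{Lemma: Wald Statistic deterministic} writes $T_n \overset{d}{=} S_n + Z_n$, where $S_n = \sum_{i=1}^{m_r(p-m_r)} (g_i^2-1)/\sqrt{2m_r(p-m_r)}$. The classical Berry-Essen inequality gives $\sup_x |\mathbb{P}(S_n \leq x) - \Phi(x)| \lesssim 1/\sqrt{p}$. Since $\Phi$ has bounded density, a standard smoothing argument produces, for every $\eps>0$,
\begin{align*}
\sup_x |\mathbb{P}(T_n \leq x) - \Phi(x)| \lesssim \frac{1}{\sqrt{p}} + \eps + \mathbb{P}(|Z_n| > \eps).
\end{align*}
Setting $t = \log(p)$ in Lemma \ref{Lemma: Wald Statistic deterministic} and optimising $\eps$ over the resulting terms produces the $\sqrt{p\log(p)/n}$, $(\mathbf{r}(\Sigma) \vee \log(p))\sqrt{\log(p)}/(\lambda_{\min}\sqrt{np})$ and $\sqrt{p}\mathbf{r}(\Sigma)/n$ contributions of the theorem.

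For the plug-in term, write $D = \hat P_r - P_r$ and expand
\begin{align*}
n\|\hat{\mathbb{I}}^{1/2} D\|_F^2 - n\|\mathbb{I}^{1/2} D\|_F^2 = n\|(\hat{\mathbb{I}}^{1/2} - \mathbb{I}^{1/2}) D\|_F^2 + 2n\langle (\hat{\mathbb{I}}^{1/2} - \mathbb{I}^{1/2})D,\, \mathbb{I}^{1/2} D\rangle.
\end{align*}
Again by smoothing it suffices to control these two quantities deterministically on an event of probability $1 - O(1/\sqrt{p})$. The explicit form of $\mathbb{I}(P_r)^{1/2}$ reduces the operator-norm estimate $\|\hat{\mathbb{I}}^{1/2} - \mathbb{I}^{1/2}\|$ to a telescoping sum of differences of $\Sigma^{-1/2}$, $C_r^{-1}$ and $P_r$. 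Standard perturbation identities give $\|\hat\Sigma^{-1/2} - \Sigma^{-1/2}\| \lesssim \|\hat\Sigma - \Sigma\|/\lambda_{\min}^{3/2}$, $\|\hat C_r^{-1} - C_r^{-1}\| \lesssim \|\hat\Sigma - \Sigma\| + |\hat\lambda_r - \lambda_r|$ and $\|\hat P_r - P_r\| \lesssim \|\hat\Sigma-\Sigma\|/\bar g_r$, while the Koltchinskii-Lounici concentration bound yields $\|\hat\Sigma - \Sigma\| \lesssim \|\Sigma\|\sqrt{(\mathbf{r}(\Sigma) \vee \log(p))/n}$ with probability at least $1 - 1/p$. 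Combined with the Davis-Kahan bound $\|D\|_F \lesssim \sqrt{m_r(\mathbf{r}(\Sigma)\vee \log(p))/n}/\bar g_r$ and the control $\|\mathbb{I}^{1/2}D\|_F \lesssim \sqrt{p/n}$ coming from Lemma \ref{Lemma: Wald Statistic deterministic}, the squared term in the expansion gives $n\|\hat{\mathbb{I}}^{1/2}-\mathbb{I}^{1/2}\|^2 \|D\|_F^2/\sqrt{p} \lesssim (\mathbf{r}(\Sigma)^2 \vee \log(p)^2)/(n\sqrt{p}\lambda_{\min}^3)$, matching the remaining term in the theorem; the cross term is estimated analogously and absorbs into the contributions already identified.

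The hard part will be the perturbation analysis of $\mathbb{I}^{1/2}$: because $\mathbb{I}(P_r)^{-1}$ scales like $\Sigma \otimes \Sigma$, its square root carries a factor of $\Sigma^{-1/2}$, which forces $\|\hat{\mathbb{I}}^{1/2} - \mathbb{I}^{1/2}\|$ to depend on $\lambda_{\min}^{-3/2}$; squaring this inside the quadratic form is the origin of the $\lambda_{\min}^{-3}$ dependence appearing in the theorem. The lower bound \eqref{lambda min cond} is calibrated precisely so that this plug-in error does not dominate, and ensuring simultaneously that the Koltchinskii-Lounici event, the event of Lemma \ref{Lemma: Wald Statistic deterministic} and the deterministic stability estimates all hold with total failure probability $\lesssim 1/\sqrt{p}$ requires careful bookkeeping of the various $\log(p)$ factors.
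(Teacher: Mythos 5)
Your overall strategy matches the paper: reduce to the oracle statistic $T_n$ built from the true Fisher information, control $\Delta(T_n,\Phi)$ via the Berry--Esseen theorem and the remainder $Z_n$ from Lemma \ref{Lemma: Wald Statistic deterministic}, and separately show the plug-in error is negligible. The paper likewise bounds $\|(\mathbb{I}(P_r)^{1/2}-\hat{\mathbb{I}}(\hat P_r)^{1/2})(\hat P_r-P_r)\|_F$ through a four-term perturbation decomposition ($I,\dots,IV$) and then invokes an anti-concentration lemma (Lemma 4.6 of \cite{KoltchinskiiLofflerNickl}) to pass this to a bound on the Kolmogorov distance. Your operator-norm perturbation identities, including $\|\hat\Sigma^{-1/2}-\Sigma^{-1/2}\|\lesssim\|\hat\Sigma-\Sigma\|/\lambda_{\min}^{3/2}$ (the source of the $\lambda_{\min}^{-3}$), are exactly what the paper uses.

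The genuine gap is the cross term in your expansion. You write the plug-in discrepancy as $n\|(\hat{\mathbb{I}}^{1/2}-\mathbb{I}^{1/2})D\|_F^2 + 2n\langle(\hat{\mathbb{I}}^{1/2}-\mathbb{I}^{1/2})D,\mathbb{I}^{1/2}D\rangle$ and claim the second term ``is estimated analogously and absorbs into the contributions already identified.'' A Cauchy--Schwarz estimate, using the bounds you yourself invoke ($\|(\hat{\mathbb{I}}^{1/2}-\mathbb{I}^{1/2})D\|_F\lesssim(\mathbf r(\Sigma)\vee\log p)/(n\lambda_{\min}^{3/2})$ and $\|\mathbb{I}^{1/2}D\|_F\lesssim\sqrt{p/n}$), yields after normalizing by $\sqrt{2m_r(p-m_r)}\asymp\sqrt{p}$ a contribution of order $(\mathbf r(\Sigma)\vee\log p)/(\sqrt{n}\lambda_{\min}^{3/2})$. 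In the spiked model this is of order $p/\sqrt{n}$, which under the theorem's operating regime $\mathbf r(\Sigma)=o(n^{2/3})$ does not tend to zero and is not dominated by any of the five terms in the theorem's bound. So the cross term does \emph{not} absorb, and this is precisely where the naive argument fails. The paper's route avoids your difficulty: it never bounds the cross term by Cauchy--Schwarz against $\|\mathbb{I}^{1/2}D\|_F\approx\sqrt{p/n}$; instead, it bounds only $\|(\hat{\mathbb{I}}^{1/2}-\mathbb{I}^{1/2})D\|_F$ and delegates the cdf-comparison to the specialized anti-concentration lemma for quadratic forms of near-Gaussian vectors, which exploits that $\sqrt{n}\,\mathbb{I}^{1/2}D$ is approximately isotropic (so $\langle\sqrt{n}(\hat{\mathbb{I}}^{1/2}-\mathbb{I}^{1/2})D,\sqrt{n}\,\mathbb{I}^{1/2}D\rangle$ concentrates at scale $\|\sqrt{n}(\hat{\mathbb{I}}^{1/2}-\mathbb{I}^{1/2})D\|$, not $\|\sqrt{n}(\hat{\mathbb{I}}^{1/2}-\mathbb{I}^{1/2})D\|\cdot\sqrt{p}$). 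To make your proof complete you either need to cite and apply that anti-concentration result, or prove directly that the cross term concentrates at the smaller scale by using the approximate Gaussian structure of $\mathbb{I}^{1/2}D$; the Cauchy--Schwarz bound alone cannot close the argument.
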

% Thus, for $z_\alpha$ denoting the $1-\alpha$ quantile of a standard normal distribution and under the conditions of Theorem \ref{Thm fisher normalization} we obtain, 
% \begin{equation}
% \lim_{n, p \rightarrow \infty} \mathbb{P} \left ( (\| %n\hat{\mathbb{I}}_1^{1/2} (\hat P_r^{(n)} - P_r^{(n)} ) \|_F^2  \leq (1+M_n^{-1/2})( \hat m_r (p-\hat m_r)+z_\alpha \sqrt{2\hat m_r(p-\hat m_r)}) \right ) \geq 1-\alpha.
% \end{equation}
%
%
 % \textcolor{red}{possibly $r(\Sigma)=o(n^{2/3})$,}
%	Namely, for $\Sigma^{(n)}$ being a spiked covariance matrix fulfilling the assumptions of Theorem \ref{Thm fisher normalization} and assuming $r(\Sigma)=o(n^{1/2})$ we have, 
%$$\mathbb{P} \left ( n\| \hat{\mathbb{I}}_1^{1/2} (\hat P_r^{(n)} - P_r^{(n)} ) \|_F^2  \leq ( \hat m_r (p-\hat m_r)+z_\alpha \sqrt{2\hat m_r(p-\hat m_r)}) \right ) \overset{n,p \rightarrow \infty}{\longrightarrow} 1-\alpha.
% $$
 %Note that in case of the spiked covariance model with one spike, i.e. $\Sigma=\lambda s \otimes s + I$ the representation of $\mathbb{I}(P_r)^{1/2}$ simplifies to 
%$$\frac{\lambda}{\sqrt{2}(\lambda+1)} \left ( P_1^\perp \otimes P_1 + P_1 \otimes P_1^\perp \right ).$$
All quantities except $P_r$ in the Wald statistic above are known or, as in the case of $m_r$, can assumed to be known (see Lemma 2.2 in \cite{KoltchinskiiLofflerNickl}). This allows the construction of statistical tests and confidence ellipsoids for $P_r$. \\\\
Considering the spiked covariance model \eqref{Spiked covariance model} we have that
$\mathbf{r}(\Sigma)\asymp p$ and $\lambda_{\min} \asymp 1$ and thus one can meaningfully apply Theorem \ref{Thm fisher normalization} if $\mathbf{r}(\Sigma) =o(n^{2/3})$. \\  %Of course this does not include the ultra high-dimensional regime with $p/n \rightarrow const.$ (e.g. \cite{JohnstoneAos2001, PaulStatisticaSinica}). However, under this assumption results have been mostly negative (as in \cite{PaulStatisticaSinica}) and we are not aware of \emph{any} procedure enabling uncertainty quantification for $P_r$. 
% Thus, an interesting topic for further research would be the behaviour of the Wald statistic in this regime. While the pertubation results for $\hat P_r$ used here may be useful for this too, eventually random matrix theory might have to be employed. \\ \\
 %This %, together with the results by \cite{KoltchinskiiLofflerNickl},  
% resembles the findings by \cite{Portnoy1988} who proved that in exponential families the likelihood ratio test is consistent when $p=o(n^{2/3})$ whereas for asymptotic normality of single coordinates of the MLE $p=o(n^{1/2})$ may be needed. \\ \\
 If $\lambda_{\min}$ shrinks to $0$ the sample size requirements are becoming worse. For example, for models with decaying eigenvalues such that $\lambda_i \asymp i^{-\alpha}$, $0\leq  \alpha < 1$ we have that $\mathbf{r}(\Sigma) \asymp p^{1-\alpha}$ and $\lambda_{\min} \asymp p^{-\alpha}$. Therefore the application of Theorem \ref{Thm fisher normalization} is feasible if $p^{3/2 + \alpha}=o(n)$. \\ \\
In case of the spiked covariance model the conditions of Theorem \ref{Thm fisher normalization} compare favorably to the bootstrap approach used by \cite{NaumovSpokoinyUlyanov} who need to assume that $\textbf{r}(\Sigma) =o(n^{1/3})$. Moreover, for models with decaying eigenvalues with $\alpha < 3/8$ their condition is worse than our requirement $p^{3/2 + \alpha}=o(n)$ whereas for $\alpha > 3/8$ their condition is better.\\  The construction proposed in \cite{KoltchinskiiLouniciPCAAOS} and \cite{KoltchinskiiLouniciPCAarxiv} requires no assumption on  $\lambda_{\min}$ and allows for $\textbf{r}(\Sigma)=o(n)$, but instead relies on sample splitting into three samples, assumes that $m_r=1$ and that $\| \Sigma \|_F^2 \rightarrow \infty$. The last condition makes their Theorem unfeasible for application to models with quickly decaying eigenvalues $\lambda_i \asymp i^{-\alpha},~\alpha > 1/2$.   \\\\
%If $m_r=1$ using the identity
% $$ \frac{\| \hat P_r- P_r \|^2}{2}= \langle P_r-\hat P_r, P_r \rangle = \left \langle \left (P_r-\hat P_r \right ) \theta_r, \theta_r \right \rangle $$ 
% and assuming $r(\Sigma)=o(n^{1/2})$ Theorem \ref{Thm: Normalized limit bilinearform eigenvector}
% can in principle be used to construct asymptotic Frobenius confidence sets for  $P_r$. However, the resulting set of the form $\{ P_r \in \mathbb{H}: ~| \langle P_r-\hat P_r, P_r \rangle | \leq C \sqrt{n} \}$ would have  asymptotic coverage $1$ and Frobenius-diameter $n^{-1/4}$ which would be of greater order than the true risk $\sqrt{r(\Sigma)/n}$, rendering it sub-optimal for  practical applications. If $r(\Sigma) \gg n^{1/2}$ the identity above can not be used anymore as $\bar \theta_r$ does not have Euclidean norm $1$. \\
%\textbf{Remark:} Assuming $m_r=1$ one can use confidence sets for $ P_r$ to obtain confidence sets for $\theta_r$, using the elementary fact that $\langle P_r \theta_r, \theta_r \rangle=1$ for $\theta_r \in \mathbb{H}$ with $\|\theta_r \|=1$ if and only if $P_r = \theta_r  \theta_r^T$. If $C_n(P_r)$ is a confidence set for $P_r$ with coverage $1-\alpha$ then $\{ \theta_r:~\| \theta_r \|=1,  ~\langle P_r \theta_r, \theta_r \rangle=1, ~~P_r \in C_n(P_r) \}$ has coverage $1-\alpha$ for $\theta_r$.

\section{Proofs}
 %In the following we will assume that $\hat \lambda_r$ and $\hat P_r$ are the estimators obtained through this procedure.
 We first collect a few results from \cite{KoltchinskiiLouniciPCAAHP} and \cite{KoltchinskiiLouniciPCABernoulli} which we will frequently use throughout our proof. The first Lemma is a perturbation bound for spectral projectors proven in \cite{KoltchinskiiLouniciPCAAHP}. 
\begin{lemma} \label{Lemma: pertubation from K&L} Suppose that $\tilde \Sigma = \Sigma + E$. Let $\tilde P_r$ be the spectral projector corresponding to the eigenvalues $\{ \mu_j(\tilde  \Sigma), ~j \in \Delta_r \}.$ Then the following bound holds
	\begin{equation} \label{PertubationProjector1}
	\| \tilde P_r - P_r \| \leq 4 \frac{\| E \|}{\bar g_r}.
	\end{equation}
	Moreover,
	\begin{equation} \label{PertubationProjectorZerlegung}
	\tilde P_r - P_r = L_r(E)+S_r(E)
	\end{equation}
	where $L_r(E)=C_r E P_r + P_r E C_r$ and where $C_r$ denotes the resolvent operator
	\begin{equation} \label{Definition: C_r}
	C_r = \sum_{s \neq r } \frac{P_s}{\lambda_r - \lambda_s}
	\end{equation} and where the remainder term can be bounded 
	\begin{equation} \label{Bound: nonlinear S_r} \| S_r(E)\| \leq 14 \left ( \frac{\| E \|}{\bar{g}_r} \right )^2.
	\end{equation} \end{lemma}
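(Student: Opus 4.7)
This is the classical Kato-type perturbation identity for spectral projectors, and the argument I would follow is essentially the one in \cite{KoltchinskiiLouniciPCAAHP}: represent both $P_r$ and $\tilde P_r$ as contour integrals of resolvents, expand the perturbed resolvent as a Neumann series in $E$, and read off the linear and higher-order terms.

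The starting point is the Riesz projector formula
\[
\tilde P_r \;=\; \frac{1}{2\pi i}\oint_{\gamma_r}(\eta I-\tilde\Sigma)^{-1}\,d\eta,\qquad P_r \;=\; \frac{1}{2\pi i}\oint_{\gamma_r}(\eta I-\Sigma)^{-1}\,d\eta,
\]
where $\gamma_r$ is the positively oriented circle of radius $\bar g_r/2$ around $\lambda_r$. It suffices to treat the regime $\|E\|<\bar g_r/4$, since otherwise both \eqref{PertubationProjector1} and \eqref{Bound: nonlinear S_r} follow trivially from the bounds $\|\tilde P_r-P_r\|\le 1$ (both projectors being orthogonal) and $\|L_r(E)\|\le 2\|E\|/\bar g_r$. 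In this regime Weyl's inequality ensures that $\gamma_r$ separates the eigenvalues of $\tilde\Sigma$ indexed by $\Delta_r$ from the remaining spectrum of $\tilde\Sigma$, so the Riesz formula for $\tilde P_r$ is valid on exactly the same contour.

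Since $\Sigma$ is self-adjoint and the only point of $\spec(\Sigma)$ within $\bar g_r$ of $\lambda_r$ is $\lambda_r$ itself, one has $\|R(\eta)\|\le 2/\bar g_r$ on $\gamma_r$, where $R(\eta):=(\eta I-\Sigma)^{-1}$. In particular $\|ER(\eta)\|<1/2$, so the Neumann expansion
\[
(\eta I-\tilde\Sigma)^{-1} \;=\; \sum_{k=0}^{\infty} R(\eta)\bigl[ER(\eta)\bigr]^{k}
\]
converges uniformly on $\gamma_r$. Integrating term by term gives the decomposition $\tilde P_r-P_r=\sum_{k\ge 1}T_k$ with $T_k:=(2\pi i)^{-1}\oint_{\gamma_r}R(\eta)[ER(\eta)]^{k}\,d\eta$. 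The identification $T_1=L_r(E)$ is a short residue computation: substituting $R(\eta)=\sum_s P_s/(\eta-\lambda_s)$ into $T_1$, the $(s,t)=(r,r)$ double pole contributes zero, while the mixed $(r,t)$ and $(s,r)$ simple poles at $\eta=\lambda_r$ produce precisely $P_r E C_r$ and $C_r E P_r$.

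For the remainder $S_r(E)=\sum_{k\ge 2}T_k$, standard ML-estimates on the contour (length $\pi\bar g_r$; integrand bounded by $(2/\bar g_r)(2\|E\|/\bar g_r)^k$) give $\|T_k\|\le(2\|E\|/\bar g_r)^k$, and summing the geometric series yields both \eqref{PertubationProjector1} and a quadratic bound on $\|S_r(E)\|$. The only real subtlety is matching the stated constants: the pure contour argument gives $\le 4\|E\|/\bar g_r$ and $\le 8(\|E\|/\bar g_r)^2$ in the non-trivial regime, and to upgrade the second constant to $14$ one has to glue this to the trivial bound $\|S_r(E)\|\le 1+2\|E\|/\bar g_r$ in the borderline regime $\|E\|\gtrsim \bar g_r/4$. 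This bookkeeping is the only tedious part of the argument and is what I expect to spend the most time on.
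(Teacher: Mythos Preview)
The paper does not actually prove this lemma: it states it as a quotation of a result from \cite{KoltchinskiiLouniciPCAAHP} and gives no argument of its own. Your proposal is the standard Riesz--Neumann expansion that underlies the cited result (and that the paper itself uses explicitly in the proof of the next lemma, Lemma~\ref{lemma higher order pertubation}), so there is nothing to compare against beyond saying that your sketch is the intended approach and is correct; your caveat about patching constants across the regimes $\|E\|\lessgtr \bar g_r/4$ is exactly the kind of bookkeeping that produces the specific constants $4$ and $14$.
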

In the course of our proofs we will also need a finer analysis of the non-linear term $S_r(E)$. 
\begin{lemma} \label{lemma higher order pertubation}
The following bound holds
	\begin{equation}
	S_r(E)=Z_r(E)+R_r(E)
	\end{equation}
	where \begin{equation}
	Z_r(E)=P_rEC_rEC_r+C_rEC_rEP_r+C_rEP_rEC_r-P_rEP_rEC_r^2-P_rEC_r^2EP_r-C_r^2EP_rEP_r
	\end{equation}
	and
	where the third order remainder term is symmetric and fulfills
	\begin{equation}
	\| R_r(E) \| \leq 72 \left ( \frac{\| E \|}{\bar g_r} \right )^3.
	\end{equation}
\end{lemma}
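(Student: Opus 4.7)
The plan is to push the contour-integral / Neumann-series expansion behind Lemma~\ref{Lemma: pertubation from K&L} one order further. Let $\Gamma_r$ be the positively oriented circle in $\mathbb{C}$ centered at $\lambda_r$ with radius $\bar g_r/2$; by the definition of $\bar g_r$ this encloses $\lambda_r$ and no other eigenvalue of $\Sigma$, and for $\|E\|<\bar g_r/2$ it also separates the eigenvalues of $\tilde \Sigma$ indexed by $\Delta_r$ from the rest of its spectrum (Weyl's inequality). With $R(z):=(\Sigma-zI)^{-1}=\sum_s (\lambda_s-z)^{-1}P_s$, the resolvent identity gives
\begin{equation*}
(\tilde \Sigma-zI)^{-1} = R(z) + \sum_{k=1}^{\infty}(-1)^{k} R(z)\bigl(E R(z)\bigr)^{k},
\end{equation*}
which converges uniformly on $\Gamma_r$ since $\|R(z)\|\leq 2/\bar g_r$ there. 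Plugging this into $\tilde P_r-P_r=-\tfrac{1}{2\pi i}\oint_{\Gamma_r}[(\tilde \Sigma-zI)^{-1}-R(z)]\,dz$ and integrating termwise isolates $L_r(E)$ (the $k=1$ contribution, as in Lemma~\ref{Lemma: pertubation from K&L}), a quadratic term $T_2(E)$ from $k=2$, and a remainder $R_r(E)$ collecting the tail $k\geq 3$.

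I would compute $T_2(E)=-\tfrac{1}{2\pi i}\oint_{\Gamma_r}R(z)ER(z)ER(z)\,dz$ by inserting the spectral resolution of $R(z)$ and swapping sum and integral to obtain
\begin{equation*}
T_2(E) = \sum_{s_1,s_2,s_3} P_{s_1} E P_{s_2} E P_{s_3}\cdot\Bigl(-\frac{1}{2\pi i}\oint_{\Gamma_r}\prod_{j=1}^{3}\frac{dz}{\lambda_{s_j}-z}\Bigr),
\end{equation*}
and then evaluating the inner scalar integrals by residues at $z=\lambda_r$, classified by how many of $s_1,s_2,s_3$ equal $r$. The zero-match case vanishes by analyticity. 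Each of the three single-match cases yields a simple pole with residue $\prod_{s_i\neq r}(\lambda_r-\lambda_{s_i})^{-1}$, and the three partial sums assemble into $P_r E C_r E C_r + C_r E P_r E C_r + C_r E C_r E P_r$. Each of the three double-match cases yields a double pole whose residue, after differentiating the remaining simple factor $(\lambda_{s_k}-z)^{-1}$, contributes $-(\lambda_r-\lambda_{s_k})^{-2}$, giving $-P_r E P_r E C_r^2 - P_r E C_r^2 E P_r - C_r^2 E P_r E P_r$. The triple-match case vanishes since $\mathrm{Res}_{z=\lambda_r}(\lambda_r-z)^{-3}=0$. The total is exactly $Z_r(E)$.

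For the remainder, the length-times-sup estimate on $\Gamma_r$ gives
\begin{equation*}
\|R_r(E)\| \leq \frac{1}{2\pi}\cdot\pi\bar g_r\cdot\sum_{k=3}^{\infty}\Bigl(\frac{2}{\bar g_r}\Bigr)^{k+1}\|E\|^{k} = \sum_{k=3}^{\infty}\Bigl(\frac{2\|E\|}{\bar g_r}\Bigr)^{k} = \frac{8(\|E\|/\bar g_r)^{3}}{1-2\|E\|/\bar g_r},
\end{equation*}
which, under a mild condition such as $\|E\|\leq 4\bar g_r/9$, is bounded by $72(\|E\|/\bar g_r)^3$. Symmetry of $R_r(E)$ is automatic: $\tilde P_r-P_r$, $L_r(E)$ and $Z_r(E)$ are each self-adjoint (using $P_r^\top=P_r$, $C_r^\top=C_r$, $E^\top=E$, transposition swaps the first and third summands of $Z_r(E)$, swaps the fourth and sixth, and fixes the second and fifth), so the same holds for their residual.

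The main technical obstacle is not analytic but combinatorial: organising the residue case analysis cleanly so that single-pole contributions land with the right signs and index placements relative to the double-pole ones, and pinning down the explicit constant $72$ through the geometric tail. The cube-order dependence itself is immediate from the Neumann bound; the value $72$ is sharpest when one keeps track of the exact radius of $\Gamma_r$ and the corresponding range of admissible $\|E\|/\bar g_r$.
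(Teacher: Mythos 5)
Your residue computation of the second-order term $T_2(E)$ is correct and gives exactly $Z_r(E)$; the case analysis by how many indices equal $r$ (zero, one, two, three matches yielding respectively nothing, the $C_r\cdots C_r$ triple, the $C_r^2$ triple, and zero) is sound, and the symmetry of $R_r(E)$ does follow from self-adjointness of $\tilde P_r - P_r$, $L_r(E)$ and $Z_r(E)$ (a small bookkeeping slip: transposition swaps the first with the \emph{second} summand, not the third, and fixes the third and fifth, but the conclusion stands). This is a more explicit derivation than the paper's, which cites Lemma~5 of \cite{KoltchinskiiLouniciPCAarxiv} for the formula.

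The gap is in the remainder bound. Your Neumann-tail estimate gives
\begin{equation*}
\|R_r(E)\| \leq \frac{8(\|E\|/\bar g_r)^3}{1 - 2\|E\|/\bar g_r},
\end{equation*}
and you then impose the side condition $\|E\| \leq 4\bar g_r/9$ to push this below $72(\|E\|/\bar g_r)^3$. But the lemma carries no such hypothesis, and the right-hand side of your estimate blows up as $\|E\|/\bar g_r \to 1/2$ (and the series representation of $R_r(E)$ fails entirely for $\|E\| \geq \bar g_r/2$). The paper closes this by a second regime: for $\|E\| \geq \bar g_r/3$ one bounds
\begin{equation*}
\|R_r(E)\| \leq \|\tilde P_r - P_r\| + \|L_r(E)\| + \|Z_r(E)\| \leq 4\frac{\|E\|}{\bar g_r} + 2\frac{\|E\|}{\bar g_r} + 6\frac{\|E\|^2}{\bar g_r^2},
\end{equation*}
using the unconditional perturbation bound \eqref{PertubationProjector1}, the trivial bound on $L_r(E)$, and the contour (or direct operator-norm) bound on $Z_r(E)$. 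Since here $1 \leq 3\|E\|/\bar g_r$, each summand is absorbed into a multiple of $(\|E\|/\bar g_r)^3$: $36 + 18 + 18 = 72$. You should add this complementary case to make the bound unconditional; as it stands your proof does not cover the full range of $\|E\|$ to which the lemma is applied.
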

\begin{proof}
	The first part and symmetry of $R_r(E)$ follows immediately by inspecting the proof of Lemma 5 in \cite{KoltchinskiiLouniciPCAarxiv}. Moreover, 
 $$R_r(E)=-\frac{1}{2 \pi i} \oint_{\gamma_r} \sum_{k \geq3} (-1)^k (R_\Sigma(\eta)E)^k R_\Sigma(\eta)d \eta,$$
		where $\gamma_r$ denotes the circle of radius $\bar g_r/2$ centred at $\lambda_r$ with counterclockwise orientation and
		$R_\Sigma(\eta)$ denotes the resolvent of $P_r$, i.e.
		$$R_\Sigma(\eta)=\sum_{j \geq 1} \frac{P_j}{\mu_j-\eta}.$$
		Assume first that $\|E \| \leq \bar g_r/3$. Then we have that
		\begin{align}
		\left \| -\frac{1}{2 \pi i} \oint_{\gamma_r} \sum_{k \geq3} (-1)^k (R_\Sigma(\eta)E)^k R_\Sigma(\eta)d \eta \right \| & \leq 2 \pi \frac{\bar g_r}{2} \left (\frac{2}{\bar g_r} \right )^4 \| E \|^3 \sum_{k=0}^\infty  \left (\frac{2 \|E \|}{\bar g_r} \right )^k \notag \\
		& \leq 24 \left ( \frac{\| E \|}{\bar g_r}\right )^3 \notag.
		\end{align}
		If $\| E \|\geq \bar g_r/3$ then by Lemma \ref{Lemma: pertubation from K&L} and the explicit representation of the second order pertubation term in Lemma 5 in \cite{KoltchinskiiLouniciPCAarxiv} we obtain that
		\begin{align}
		& \left \| -\frac{1}{2 \pi i} \oint_{\gamma_r} \sum_{k \geq3} (-1)^k (R_\Sigma(\eta)E)^k R_\Sigma(\eta)d \eta \right \| \notag \\ \leq & \| \hat P_r - P_r \| + \| L_r(E)\| + \left \| -\frac{1}{2 \pi i} \oint_{\gamma_r} (R_\Sigma(\eta)E)^2 R_\Sigma(\eta)d \eta \right \| \notag \\
		\leq & 4 \frac{\| E \|}{\bar g_r} + \frac{2 \| E \|}{\bar g_r} + \frac{6 \| E \|^2}{\bar g_r} 
		\leq 72 \left (\frac{\|E\|}{\bar g_r} \right )^3 \notag
		\end{align}
\end{proof}
To bound $ \| \hat \Sigma - \Sigma \|$ we will frequently use the following bound and concentration inequality obtained by Koltchinskii and Lounici  in \cite{KoltchinskiiLouniciPCABernoulli}.
\begin{thm}  \label{Theorem bound Sigma from KL} Let $X_1,...,X_n$ be i.i.d. centred Gaussian random vectors with covariance matrix $\Sigma$ and such that $\mathbb{E}\|X_1 \|^2 < \infty$. Suppose that $\textbf{r}(\Sigma)=o(n)$.  Then, for some constant $C_q > 0$
	\begin{equation}
	\left (\mathbb{E} \| \hat \Sigma - \Sigma \|^q \right )^{1/q} \leq C_q \| \Sigma \| \sqrt{\frac{\textbf{r}(\Sigma)}{n}}.
	\end{equation}
	Moreover, there exists another constant $C' >0$ such that for all $t\geq 1$ with probability at least $1-e^{-t}$ we have that, 
	\begin{equation} \label{Bound: Concentration for Sigma}
	\left |  \| \hat \Sigma - \Sigma \| - \mathbb{E} \| \hat \Sigma - \Sigma \| \right |  \leq C' \| \Sigma \|  \left (  \sqrt{\frac{t}{n}} \bigvee \frac{t}{n}\right ).
	\end{equation}	
\end{thm}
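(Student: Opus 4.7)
The plan is to use Lemma \ref{Lemma: Wald Statistic deterministic} as the starting point and split the task into three components: (i) a classical Berry--Esseen bound for the Gaussian chaos $\sum_{i=1}^{N}(g_i^2-1)/\sqrt{2N}$ with $N := m_r(p-m_r)$; (ii) conversion of the in-probability remainder $Z_n$ of that Lemma into a Kolmogorov-distance bound; and (iii) replacement of the true Fisher normalization $\mathbb{I}(P_r)^{1/2}$ by the plug-in $\hat{\mathbb{I}}(\hat P_r)^{1/2}$ inside the Wald statistic.

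For (i) I would apply the classical Berry--Esseen inequality to the i.i.d.\ centred variables $g_i^2 - 1$, which have finite third moments, obtaining $\sup_{x} |\mathbb{P}(\sum_{i=1}^{N}(g_i^2-1)/\sqrt{2N} \leq x) - \Phi(x)| \lesssim 1/\sqrt{N} \lesssim 1/\sqrt{p}$; this accounts for the leading term. For (ii) I would set $t \asymp \log p$ in \eqref{Errorterm} so that the high-probability bound holds on an event of probability at least $1 - 1/p$; substituting this choice of $t$ reproduces exactly the terms $\sqrt{p\log(p)/n}$, $(\mathbf{r}(\Sigma) \vee \log p)\sqrt{\log p}/(\lambda_{\min}\sqrt{np})$, and $\sqrt{p}\,\mathbf{r}(\Sigma)/n$ in the stated bound. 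Transferring from in-probability control to Kolmogorov distance is routine via the sandwich $|\mathbb{P}(X \leq x) - \Phi(x)| \leq \mathbb{P}(|X - Y| > \delta) + \sup_x |\mathbb{P}(Y \leq x + \delta) - \Phi(x)| + \delta$, exploiting the Lipschitz constant $1/\sqrt{2\pi}$ of $\Phi$ and taking $\delta$ equal to the bound on $Z_n$.

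The substantive new work is step (iii), namely bounding
\begin{equation*}
D_n := \frac{n}{\sqrt{2m_r(p-m_r)}} \bigl(\|\hat{\mathbb{I}}(\hat P_r)^{1/2}(\hat P_r - P_r)\|_F^2 - \|\mathbb{I}(P_r)^{1/2}(\hat P_r - P_r)\|_F^2\bigr).
\end{equation*}
Using the identity $\|A^{1/2} v\|_F^2 - \|B^{1/2} v\|_F^2 = \langle (A-B) v, v\rangle_F$ for positive semidefinite $A, B$, I would rewrite $D_n$ as a linear functional of $\hat{\mathbb{I}}(\hat P_r) - \mathbb{I}(P_r)$. By the Kronecker-product structure of \eqref{Waldparametric} and \eqref{Def: Empirical Fisher Information}, this difference splits into finitely many tensor products each carrying at least one factor of $\hat\Sigma^{-1/2} - \Sigma^{-1/2}$, $\hat P_r - P_r$, or $\hat C_r^{-1} - C_r^{-1}$. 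The individual factors are handled by Lemma \ref{Lemma: pertubation from K&L} and the Hilbert--Schmidt bound $\|\hat P_r - P_r\|_F \lesssim \sqrt{\mathbf{r}(\Sigma)/n}/\bar g_r$ from \cite{KoltchinskiiLouniciPCAAHP}, by Weyl's inequality combined with Theorem \ref{Theorem bound Sigma from KL} for $|\hat\lambda_r - \lambda_r|$ and hence $\|\hat C_r^{-1} - C_r^{-1}\|$, and by the integral representation
\begin{equation*}
\hat\Sigma^{-1/2} - \Sigma^{-1/2} = \frac{1}{\pi}\int_0^\infty s^{-1/2}(\hat\Sigma + s)^{-1}(\Sigma - \hat\Sigma)(\Sigma + s)^{-1}\, ds,
\end{equation*}
valid on the event $\hat\lambda_{\min} > 0$ guaranteed by \eqref{lambda min cond}, which yields $\|\hat\Sigma^{-1/2} - \Sigma^{-1/2}\| \lesssim \|\hat\Sigma - \Sigma\|/\lambda_{\min}^{3/2}$.

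The main obstacle, and the source of the $\lambda_{\min}$ assumption \eqref{lambda min cond}, is that $D_n$ carries a \emph{quadratic} contribution in $\hat\Sigma^{-1/2} - \Sigma^{-1/2}$ arising from the two appearances of $\hat\Sigma^{-1/2}$ in $\hat{\mathbb{I}}(\hat P_r)$. Estimating this second-order term by the product of two first-order bounds produces a factor $\|\hat\Sigma - \Sigma\|^2/\lambda_{\min}^3 \lesssim \mathbf{r}(\Sigma)/(n\lambda_{\min}^3)$; multiplying by $n\|\hat P_r - P_r\|_F^2 \lesssim \mathbf{r}(\Sigma)$ and dividing by $\sqrt{p}$ yields exactly the term $\mathbf{r}(\Sigma)^2/(n\sqrt{p}\lambda_{\min}^3)$ in the upper bound. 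The linear-in-$(\hat\Sigma^{-1/2}-\Sigma^{-1/2})$ contribution and all remaining lower-order terms are handled by analogous perturbation estimates in combination with the concentration inequality of Theorem \ref{Theorem bound Sigma from KL}, producing error terms that are either already listed or absorbed in the stated bound. Taking $t \asymp \log p$ in the concentration inequality and intersecting with the high-probability event of Theorem \ref{Theorem bound Sigma from KL} then produces the final Berry--Esseen bound, with \eqref{lambda min cond} being precisely the hypothesis needed for the $\hat\Sigma^{-1/2}$-perturbation terms to be $o(1)$.
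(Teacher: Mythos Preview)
Your proposal does not address the stated theorem at all. The statement given is Theorem~\ref{Theorem bound Sigma from KL}, the moment bound and concentration inequality for $\|\hat\Sigma-\Sigma\|$, which the paper does not prove but simply quotes from Koltchinskii and Lounici \cite{KoltchinskiiLouniciPCABernoulli}. There is nothing to prove here within this paper; it is an imported technical tool.

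What you have written is instead a proof sketch for Theorem~\ref{Thm fisher normalization}, the Berry--Esseen bound for the empirical Wald statistic. This is clear from the three-part plan (Berry--Esseen for $\sum(g_i^2-1)$, conversion of $Z_n$ to Kolmogorov distance, replacement of $\mathbb{I}(P_r)^{1/2}$ by $\hat{\mathbb{I}}(\hat P_r)^{1/2}$), and from the fact that you explicitly invoke Theorem~\ref{Theorem bound Sigma from KL} as an ingredient in your own argument. You cannot be proving a result that you are simultaneously using as a black box. Please re-read the statement you were asked to prove: it concerns only $(\mathbb{E}\|\hat\Sigma-\Sigma\|^q)^{1/q}$ and the deviation $|\,\|\hat\Sigma-\Sigma\|-\mathbb{E}\|\hat\Sigma-\Sigma\|\,|$, and has nothing to do with Wald statistics, $\hat P_r$, or Berry--Esseen bounds.
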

%	In view of Lemma \ref{Lemma: Wald Statistic deterministic} it %suffices to obtain an expression for $n \mathbb{E} \| \mathbb{I}_1^{1/2} (\hat P_r - P_r) \|_F^2$, to show that $\hat m_r=m_r$ with probability going to $1$ and to bound $\| (\hat{\mathbb{I}}_1^{1/2}-\mathbb{I}_n^{1/2}) (\hat P_r - P_r) \|_F^2$. \\
In the following we denote $E=\hat \Sigma - \Sigma$, $L_r:=L_r(E)$, $S_r:=S_r(E)$, $Z_r:=Z_r(E)$ and $R_r:=R_r(E)$.
We now turn to the proof of Lemma \ref{Lemma: Wald Statistic deterministic}.
\begin{proof}[Proof of Lemma \ref{Lemma: Wald Statistic deterministic}]
		Going line by line through the proofs of Lemma 5, Theorem 5 and the calculations leading to display (5.17) in \cite{KoltchinskiiLouniciPCAAOS} it is easy to see that one can adjust them to show
	\begin{align} \frac{n  \left [ \| \mathbb{I}(P_r)^{1/2}(\hat P_r - P_r) \|_F^2-\mathbb{E} \| \mathbb{I}(P_r)^{1/2}(\hat P_r - P_r) \|_F^2  \right ]}{\sqrt{2m_r(p-m_r)}} \overset{d}{=} \frac{\sum_{i=1}^{m_r(p-m_r)} (g_i^2-1)}{\sqrt{2m_r(p-m_r)}} + Z_n',
	\end{align}
	where $Z_n'$ fulfills with probability at least $1-e^{-t}$ for every $1 \leq t \leq n$ 
	\begin{equation}
	|Z_n'| \leq C'(\gamma, \bar g_r, m_r, \| \Sigma \|, \lambda_r ) \left (\frac{t}{\sqrt{n}} \bigvee \frac{(\textbf{r}(\Sigma) \vee t) \sqrt{t}}{\lambda_{\min}\sqrt{np}} \bigvee \sqrt{\frac{p}{n}}\sqrt{t} \bigvee \frac{t^{3/2}}{\sqrt{np}} \right ).
	\end{equation}
	Thus it remains to obtain a tight bound for  $ \mathbb{E} \| \mathbb{I}(P_r)^{1/2} (\hat P_r - P_r) \|_F^2$. Using decomposition \eqref{PertubationProjectorZerlegung} we obtain that 
	\begin{equation} \label{Decomp expect of I} \mathbb{E} \| \mathbb{I}(P_r)^{1/2} (\hat P_r - P_r) \|_F^2 = \| \mathbb{I}(P_r)^{1/2} L_r \|_F^2 + \| \mathbb{I}(P_r)^{1/2} S_r \|_F^2 + \trace \left  ((\mathbb{I}(P_r)^{1/2} L_r)(\mathbb{I}(P_r)^{1/2} S_r) \right ). \end{equation}
	%We denote $E^{(n)}=\hat \Sigma^{(n)}-\Sigma^{(n)}$. 
As in the proof of Theorem 5 in \cite{KoltchinskiiLouniciPCAAOS} we obtain that 
	$$n \mathbb{E} \| \mathbb{I}(P_r)^{1/2} L_r \|_F^2= n \mathbb{E} \| P_r^\perp (\Sigma)^{-1/2} E (\Sigma)^{-1/2} P_r \|_F^2 =  m_r (p-m_r).$$
	Moreover, the second term in \eqref{Decomp expect of I} can be bound by applying Lemma \ref{Lemma: pertubation from K&L} and Theorem \ref{Theorem bound Sigma from KL} as follows \begin{align}
	\mathbb{E}  \| \mathbb{I}(P_r)^{1/2} S_r \|_F^2 \leq 2m_r \mathbb{E} \left \| C_r^{-1} \Sigma^{-1/2} S_r P_r \Sigma^{-1/2} \right \| \lesssim \frac{2  m_r \| \Sigma \|^2}{\bar g_r^4{\lambda_r \lambda_{\min}}} \mathbb{E} \| E \|^4= O \left ( \frac{\textbf{r}(\Sigma)^2}{n^2 \lambda_{\min} }\right ). \notag
	\end{align}
	For the last remainder term in \eqref{Decomp expect of I} the naive use of Cauchy-Schwarz does not suffice and and we have to use higher-order pertubation theory to obtain good enough bounds. %For given $k \in \mathbb{N}$ let $L$ be a non-empty strict subset of $\{1, \dots, k+1\}$ and denote by $\mathcal{L}_k$ the set of all such $L$. Define $m_L=|L|$ and denote by $V_L$ the set of vectors $\nu=(\nu_l: l \in L^c)$ with nonnegative integer components such that $\sum_{l \in L^c} \nu_l=m_L-1$. Then, by Lemma 5 from \cite{KoltchinskiiLouniciPCAarxiv} we have that
	%	\begin{equation} \label{Rep nonlinear term}
	%	S_r=\sum_{k\geq 2} \sum_{L \in \mathcal{L}_k}(-1)^{m_L-1} \sum_{\nu \in V_L}A_\nu(E)
	%	\end{equation}
	%	where $A_\nu(E):=B_1E \dots B_kEB_{k+1}$ with $B_l=P_r$ for $l \in L$ and $B_l=C_r^{\nu_l+1}$ for $l \in L^c$. 
	%Applying lemma \ref{lemma higher order pertubation} we have that 
	%	\begin{align}
	%	&%\left \langle \mathbb{I}_1^{1/2} L_r,
	%	 \sum_{L \in \mathcal{L}_2} (-1)^{m_L-1} \sum_{\nu \in V_L} \mathbb{I}_1^{1/2} A_\nu(E) %\right \rangle 
	%	 \notag \\
	%\mathbb{I}^{1/2} S_r= \frac{1}{\sqrt{2}} \Sigma^{-1/2} \left ( P_rEC_rEP_r^\perp + P_r^\perp E C_r E P_r - P_r E P_r EC_r - C_r E P_r E P_r\right ) \Sigma^{1/2} + \mathbb{I}^{1/2}(R_r(E)). \notag
	%	\end{align}
	Applying Lemma \ref{lemma higher order pertubation} and using symmetry of $L_r$, $R_r$, $Z_r$ and $\mathbb{I}(P_r)^{1/2}$ we obtain that,
	\begin{align} \label{Bound cross term I}
	&  \mathbb{E} \left \langle \mathbb{I}(P_r)^{1/2} L_r, \mathbb{I}(P_r)^{1/2} S_r  \right \rangle = \mathbb{E} \left \langle \Sigma^{-1} P_r E P_r^\perp \Sigma^{-1},P_r E C_r  EP_r^\perp - P_rEP_rEC_r \right \rangle \notag \\ & +   \mathbb{E} \left \langle \Sigma^{-1} P_r E P_r^\perp \Sigma^{-1},  P_rR_r(E)C_r^{-1} \right \rangle.
	\end{align}
	We now bound the three terms in \eqref{Bound cross term I} separately. 
	% Denoting $\Delta_r:=\{j: \mu_j = 
	For $\{ \theta_j\}_{j \in \Delta_r}$ denoting the eigenvectors of an eigen-decomposition of $P_r$ $\langle X_1, \theta_j \rangle$ and $P_r^\perp X_1$ are independent. Thus, we obtain that the first term in \eqref{Bound cross term I} equals
	\begin{align}
	& \mathbb{E} \left \langle \Sigma^{-1} P_r E P_r^\perp \Sigma^{-1}, P_r EC_rE P_r^\perp \right \rangle \notag \\  =  & \frac{1}{n^2} \sum_{j \in \Delta_r} \trace \left ( \Sigma^{-1} P_r^\perp X_1 X_1^T \theta_j \theta_j^T \Sigma^{-1} \theta_j \theta_j^T X_1  X_1^T C_r( X_1  X_1^T - \Sigma) P_r^\perp)\right ) \notag \\
	%= &\frac{1}{n^2 \lambda_r} \sum_{j \in \Delta_r} \mathbb{E} \trace (\Sigma^{-1} P_r^\perp (X_1 \otimes X_1) C_r ( X_1 \otimes X_1 - \Sigma) P_r^\perp ) \mathbb{E} \langle X_1, \theta_j \rangle^2 \notag \\
	= &\frac{m_r}{n^2} \sum_{i \neq l, i,l \notin \Delta_r} \frac{1}{\lambda_i(\lambda_r-\lambda_l)} \mathbb{E} \trace \left [ \theta_i  \theta_i^T X_1 X_1^T \theta_l \theta_l^T X_1  X_1^T \theta_i  \theta_i^T \right ]\notag  \\ \notag
	& + \frac{m_r}{n^2} \sum_{i \notin \Delta_r}  \frac{1}{\lambda_i(\lambda_r-\lambda_i)} \mathbb{E} \trace \left [ \theta_i \theta_i^T X_1  X_1^T \theta_i  \theta_i^T(X_1  X_1^T-\Sigma) \theta_i  \theta_i^T \right ] \notag \\
	= & \frac{m_r}{n^2} \left [\sum_{i \neq l, i,l \notin \Delta_r} \frac{1}{\lambda_i(\lambda_r-\lambda_l)} \mathbb{E} \langle X_1, \theta_i \rangle^2 \mathbb{E} \langle \theta_l, X_1\rangle^2 +  \sum_{i \notin \Delta_r} \frac{1}{\lambda_i(\lambda_r-\lambda_i)} ( \mathbb{E} \langle X_1, \theta_i \rangle^4 - \mathbb{E} \langle X_1, \theta_i \rangle^2 \lambda_i)\right ] \notag \\
	= & \frac{m_r}{n^2} \left [ \sum_{i \neq l, i,l \notin \Delta_r} \frac{\lambda_l}{\lambda_r-\lambda_l}+\sum_{i \neq \Delta_r} \frac{2 \lambda_i}{\lambda_r-\lambda_i} \right ] \lesssim \frac{m_r}{\bar g_r} \frac{p \mathbf{r}(\Sigma)}{ n^2}
	\end{align}

	The second term in \eqref{Bound cross term I} can be estimated similarly, 
	\begin{align}
	& \frac{1}{n^2\lambda_r } \sum_{j \in \Delta_r}  \mathbb{E} \trace \left ( \Sigma^{-1} P_r^\perp X_1  X_1^T \theta_j  \theta_j^T (X_1  X_1^T - \Sigma) \theta_j \theta_j^T X_1 X_1^T C_r  \right ) \notag \\
	= & \frac{2m_r \lambda_r}{n^2} \mathbb{E} \trace ( P_r^\perp \Sigma^{-1}X_1  X_1^T C_r) \notag \\
	= & \frac{2m_r \lambda_r}{n^2} \sum_{i \notin \Delta_r} \frac{\mathbb{E}\langle \theta_i, X_1\rangle^2}{\lambda_i(\lambda_r-\lambda_i)} \lesssim \frac{ m_r\| \Sigma \|}{\bar g_r} \frac{p}{n^2}.
	\end{align}
	The last term can be bound using Cauchy Schwarz, Lemma \ref{lemma higher order pertubation} and \ref{Theorem bound Sigma from KL},
	\begin{align} \label{Bound remainder}
	& \mathbb{E} \left \langle \Sigma^{-1} P_r E P_r^\perp \Sigma^{-1/2},  P_rR_r(E)C_r^{-1} \right \rangle  \notag \\\leq & \frac{\|\Sigma\| \sqrt{m_r}}{\sqrt{\lambda_r \lambda_{\min}}} \sqrt{\mathbb{E} \| P_r\Sigma^{-1/2} E \Sigma^{-1/2}P_r^{\perp} \|_F^2} \sqrt{\mathbb{E} \| R_r(E)\|^2} \notag \\
	\lesssim  & \frac{\| \Sigma \| m_r}{\bar g_r^3 \sqrt{\lambda_r \lambda_{\min}}} \sqrt{\frac{p}{n}} (\mathbb{E} \|E \|^6)^{1/2} \lesssim \frac{\| \Sigma \|^4 m_r}{\bar g_r^3 \sqrt{\lambda_r}} \frac{\sqrt{p} \mathbf{r}(\Sigma)^{3/2}}{n^2 \sqrt{\lambda_{\min}}}
	\end{align}
	Thus, summarizing, we have that $$\frac{n \mathbb{E} \| \mathbb{I}(P_r)^{-1/2}(\hat P_r - P_r) \|_F^2}{\sqrt{2 m_r(p-m_r)}} = \frac{m_r(p-m_r)}{\sqrt{2 m_r(p-m_r)}}+O \left ( \frac{\sqrt{p}\mathbf{r}(\Sigma)}{n}+\frac{\mathbf{r}(\Sigma )^2}{n\sqrt{p} \lambda_{\min}} \right ),$$ and the claim follows.
\end{proof}
We now turn to the proof of Theorem \ref{Thm fisher normalization} and show first that we can replace $\mathbb{I}(P_r)^{1/2}$ by $\hat {\mathbb{I}}(\hat P_r)^{1/2}$.
\begin{proof}[Proof of Theorem \ref{Thm fisher normalization}]
We have that
\begin{align}
 & \| (\mathbb{I}(P_r)^{1/2}-\hat{\mathbb{I}}(\hat P_r)^{1/2})(\hat P_r - P_r ) \|_F  \notag \\ \leq & 2\sqrt{ m_r}\| \hat P_r - P_r \|  \| \hat C_r^{-1} \| \| \hat \Sigma^{-1/2} \| \left ( \left |\frac{1}{\sqrt{\hat \lambda_r}}-\frac{1}{\sqrt{\lambda_r}} \right | + \frac{ \| \hat P_r - P_r \|}{\sqrt{\lambda_r}} \right ) \notag \\
 & + \frac{2\sqrt{ m_r }}{\sqrt{\lambda_r}}\| \hat P_r - P_r \| \bigg (\| \hat C_r^{-1}\| \| \hat \Sigma^{-1/2} - \Sigma^{-1/2} \| + \| C_r^{-1} - \hat C_r^{-1} \| \| \Sigma^{-1/2}\| \bigg) \notag \\
 =: & I+II+III+IV.
\end{align}
We now bound each of these four terms separately. We have that

\begin{align}
%& n \left \| \left (\hat \Sigma^{-1/2}\hat C_r^{-1} L_r \hat P_r \hat \Sigma^{-1/2} \right ) - \left (\hat \Sigma^{-1/2}\hat C_r^{-1} L_r  P_r  \Sigma^{-1/2} \right )  \right \|_F^2 \notag \\
%\leq & 2nm_r  \left \| \hat \Sigma^{-1/2}\hat C_r^{-1} \right \|^2 \| L_r \|^2 \left  \| \frac{\hat P_r}{\hat \lambda_r}  - \frac{ P_r}{ \lambda_r}  \right \|^2 \notag \\
%\leq & 4nm_r \frac{\| \hat \Sigma \|^2}{\hat \lambda_{\min}} \frac{\| E\|^2}{\bar g_r^2} \left ( \left  \| \frac{\hat P_r}{\hat \lambda_r}  - \frac{ P_r}{ \hat \lambda_r}  \right \|^2 + \left  \| \frac{ P_r}{\hat \lambda_r}  - \frac{ P_r}{ \lambda_r}  \right \|^2 \right ) \notag \\
%\leq & 32 n m_r \frac{\| \Sigma \|^2 \| E \|}{\lambda_{\min} \bar g_r^2} \left ( \frac{16 \| E \|^2}{\bar g_r^2 \hat \lambda_r} + \left (\frac{1}{\lambda_r}-\frac{1}{\hat \lambda_r}\right )^2 \right ) \notag \\
I \leq &  8\sqrt{m_r } \frac{\| E \|}{\bar g_r} \| \Sigma + E \| |\hat \lambda_{\min}|^{-1/2}   \sum_{k=1}^\infty \frac{| \hat \lambda_r - \lambda_r|^k}{\lambda_r^{k+1/2}}  \notag \\
 \leq & 16\sqrt{  m_r }\frac{\| E \|}{\bar g_r} \|\Sigma\| \left |\lambda_{\min}-\|E\| \right |^{-1/2}\sum_{k=1}^\infty \frac{\|E \|^k}{\lambda_r^{k+1/2}}
 \lesssim \frac{\sqrt{m_r} \| \Sigma \|^3}{\bar g_r \sqrt{\lambda_r}}\frac{\mathbf{r}(\Sigma) \vee t}{n \sqrt{\lambda_{\min}}},
\end{align}
with probability at least $1-e^{-t}$ for $1 \leq t \leq \log(p)$ and 
where we used  Theorem \ref{Theorem bound Sigma from KL}  to bound $\| E \|$,  Lidski's inequality to bound $| \hat \lambda_r- \lambda_r|$ and the $\lambda_{\min}$ condition \eqref{lambda min cond} to bound $  \lambda_{\min} - \| E\|  \geq \lambda_{\min}/2$. 
The second term can be bounded likewise, i.e. on the same event as the bound for $I$ we have with probability at least $1-e^{-t}$ for $1 \leq t \leq \log(p)$ that
\begin{align}
II & \leq \frac{32 \sqrt{ m_r + m_r }}{\sqrt{\lambda_r}} \frac{\|E\|^2}{\bar g_r^2} \| \hat \Sigma\| \| \hat \Sigma^{-1/2} \| \lesssim \frac{\sqrt{m_r} \| \Sigma \|^3}{\bar g_r^2 \sqrt{\lambda_r}}\frac{\mathbf{r}(\Sigma) \vee t}{n \sqrt{\lambda_{\min}}}.
\end{align}
%Thus, by the triangle inequality and adding and subtracting $\hat \Sigma^{-1/2}(\hat C_r)^{-1} L_r P_r \Sigma^{-1/2}$ we can bound \eqref{Bound Fisher info norm ii} by
%	\begin{align}
%	n \left \| \left ( \hat \Sigma^{-1/2}\hat C_r^{-1} - \Sigma^{-1/2} C_r^{-1}\right ) L_r \frac{P_r}{\sqrt{\lambda_r}} \right \|_F^2 +C'(\lambda_r, m_r, \bar g_r, \| \Sigma \|^2 )\left ( \frac{\textbf{r}(\Sigma)^2 \bigvee \frac{t^2}{n^2}}{n \lambda_{\min}} \right ). \label{Bound Fisher info norm ii 2}
%	\end{align}
%	Then, we can bound \eqref{Bound Fisher info norm ii 2} with probability at least $1-e^{-t}$ by
%	\begin{align}
%	& n \Bigg( \left \| \left ( \hat \Sigma^{-1/2} - \Sigma^{-1/2} \right )C_r^{-1}L_r \frac{P_r}{\sqrt{\lambda_r}}\right \|_F^2 \notag \\ + &  \left \| (\hat \Sigma)^{-1/2}\left ( \hat C_r^{-1} - C_r^{-1}\right )L_r \frac{P_r}{\sqrt{\lambda_r}} \right \|_F^2 \Bigg ) + C'(\lambda_r, m_r, \bar g_r, \| \Sigma \|^2 )\left ( \frac{\textbf{r}(\Sigma)^2 \bigvee t^2 }{n \lambda_{\min}} \right )
%	\label{Bound Fisher info norm ii 3}.
%	\end{align}
Using matrix series expansions of $ \Sigma \hat \Sigma^{-1}$ and of $(\Sigma \hat \Sigma^{-1})^{1/2}$ around $I$ we can bound the third term on the same event. We have with probability at least $1-e^{-t}$  for $1 \leq t \leq \log(p)$:
	\begin{align}
%	& m_r \left \| \left ( \hat \Sigma^{-1/2} - \Sigma^{-1/2} \right ) \right \|^2 \frac{\|  E \|^2}{\lambda_r}  \notag \\\leq & \frac{m_r \| \hat  E \|^2}{\lambda_r} \left \| \Sigma^{-1/2} \right \|^2 \left \| \left (I+\sum_{k=1}^\infty \Sigma^{-k}(\hat \Sigma - \Sigma^{(n})^k \right )^{1/2} - I\right \|^2 \notag \\
III  & \leq \frac{8\sqrt{ m_r}\| \hat \Sigma \| \|E \|}{\bar g_r \sqrt{\lambda_r}} \| \hat \Sigma^{-1/2} - \Sigma^{-1/2} \| \notag \\
	\lesssim  & \frac{\sqrt{ m_r} \| \hat \Sigma \| \|  E\|}{\bar g_r \sqrt{\lambda_{\min }\lambda_r}} \sum_{j=1}^\infty \left ( \sum_{k=1}^\infty \| \Sigma^{-1} \|^{k}\| E \|^{k} \right)^j \notag \\
	\lesssim & \frac{\sqrt{m_r} \| \Sigma\|^3}{\bar g_r\sqrt{\lambda_r}} \frac{\mathbf{r}(\Sigma) \vee t}{n \lambda_{\min}^{3/2}},
	\end{align}
	where we used again the $\lambda_{\min}$ condition \eqref{lambda min cond} to ensure convergence of the series. 
	Finally, the fourth term can be bound in the same fashion on the same event.  For $1 \leq t \leq \log(p)$ with probability at least $1-e^{-t}$ we have that
	\begin{align}
	IV & \leq \frac{8\sqrt{2 m_r} \| E \|}{\bar g_r \sqrt{\lambda_r \lambda_{\min}}} (|\hat \lambda_r - \lambda_r |+\| E \|) \lesssim \frac{m_r \|\Sigma \|^2}{\bar g_r \sqrt{\lambda_r}} \frac{\mathbf{r}(\Sigma) \vee t}{n \sqrt{\lambda_{\min}}}.
	\end{align}
	Thus, summarizing, and since we bounded $I,II,III$ and $IV$ on the same event we have, choosing $t=\log(p)$ with probability at least $1-1/p$ that
	\begin{align} \label{Finalbound}
	\frac{n\| (\mathbb{I}(P_r)^{1/2}-\hat {\mathbb{I}}(P_r)^{1/2}) (\hat P_r - P_r)\|_F^2}{\sqrt{2m_r(p-m_r)}} \leq C(\| \Sigma \|, \lambda_r, m_r, \bar g_r) \frac{\mathbf{r}(\Sigma)^2 \vee \log(p)^2}{n \sqrt{p} \lambda_{\min}^3}.
	\end{align}
Defining for random variables $\eta$ and $\xi$ $\Delta(\eta, \xi):=\sup_{x \in \mathbb{R}} \left | \mathbb{P} \left ( \xi \leq x \right ) -\mathbb{P} \left ( \eta \leq x \right ) \right |$ the anti-concentration bound in Lemma 4.6. from \cite{KoltchinskiiLofflerNickl} combined with \eqref{Finalbound} and Lemma \ref{Lemma: Wald Statistic deterministic}  thus implies that for $\eta:=\frac{n\| \hat {\mathbb{I}}(P_r)^{1/2} (\hat P_r - P_r)\|_F^2-m_r(p-m_r)}{\sqrt{2m_r(p-m_r)}}$
 and $Z$ denoting a standard Gaussian random variable we have that,
 \begin{align}
\Delta(\eta, Z) \leq & \Delta(\xi, Z) +  \frac{1}{p} \bigvee C(\gamma, \bar g_r, m_r, \| \Sigma \|, \lambda_r) \bigg [  \frac{\textbf{r}(\Sigma)^2 \vee \log(p)^2}{n \sqrt{p}  \lambda_{\min}^3} + \sqrt{\frac{p \log(p)}{n}}\notag \\ & + \frac{\left (\mathbf{r}(\Sigma) \vee \log(p) \right ) \sqrt{\log(p)}}{\lambda_{\min} \sqrt{np}} \notag  +
\frac{ \sqrt{p}\textbf{r}(\Sigma)}{n} \bigg ],
\end{align}
where the main term $\xi$ is defined as  $\sum_{i=1}^{m_r(p-m_r)}(g_i^2-1)/\sqrt{2m_r(p-m_r)}$.
Theorem \ref{Thm fisher normalization} now follows from the bound above and the Berry-Essen Theorem applied to $\Delta(\xi, Z)$.
\end{proof}

%\begin{supplement}
%\sname{Supplement A}\label{suppA}
%\stitle{Title of the Supplement A}
%\slink[url]{http://www.e-publications.org/ims/support/dowload/imsart-ims.zip}
%\sdescription{Dum esset rex in
%accubitu suo, nardus mea dedit odorem suavitatis. Quoniam confortavit
%seras portarum tuarum, benedixit filiis tuis in te. Qui posuit fines tuos}
%\end{supplement}


\begin{thebibliography}{9}

	\bibitem{Anderson1963}
	T. W. {Anderson.} (1963).
	\newblock Asymptotic theory for principal component analysis. 
	\newblock { \em Ann. Math. Statist.}, 34 122-148 \MR{0145620}
	
	\bibitem{BerthetRiggolet}
	Q. Berthet and P. Rigollet. (2013).
	\newblock Optimal detection of sparse principal components in high dimension.
	\newblock { \em Ann. Statist.}, 41(4), 1780-1815.
	\MR{3127849}
	
	
	
%	\bibitem{Birge2001}
%	L. {Birg\'e}. (2001).
%	\newblock An alternative point of view on Lepski's Method
%	\newblock In { \em State of the Art in Probability and Statistics} (M. de Gunst, C. Klaassen and A. van der Vaart, eds.) 113–133. IMS, Beachwood, OH. \MR{1836557}
	% \MR1836557
	
%	\bibitem{BlanchardBousquetZwald}
%	G. Blanchard, O. Bousquet, and L. Zwald. (2007).
%\newblock Statistical properties of kernel principal component analysis. \newblock { \em Machine Learning,} 66(2-3):259-294.

	\bibitem{CaiMaWu2013}
		T. Cai, Z. Ma, and Y. Wu. (2013).
		\newblock Sparse PCA: optimal rates and adaptive estimation. 
		\newblock {\em Ann. Statist.,} 41(6): 3074-3110
		\MR{3161458}

	
%	\bibitem{DauxoisPousseRomain1982}
%	J. {Dauxois}, A. {Pousse}, and Y. {Romain.} (1982).
%	\newblock Asymptotic theory for the principal component analysis of a vector random function: some applications to statistical inference.
%	\newblock { \em J. Multivariate Anal.,} 12(1):136-154. \MR{0650934}
	
	\bibitem{Eaton}
	M.L. {Eaton}. (1983).
	\newblock Multivariate Statistics: A Vector Space Approach. 
	\newblock John Wiley \& Sons, Inc., New York \MR{0716321}


	\bibitem{GaoZhou}
		C. Gao and H. Zhou. (2015).
	\newblock	Rate-optimal posterior contraction for sparse PCA.
\newblock {\em Ann. Statist.} 43(2), 785-818 \MR{3325710}
	
%	\bibitem{HodgesLehmann}
%	J. L. Hodges and E. L. Lehmann. (1951).
%	\newblock 	Some Applications of the Cram\'er-Rao Inequality
%	\newblock { \em Proc. Second Berkeley Symp. on Math. Statist. and Prob.}, Univ. of Calif. Press, 13-22

	

	\bibitem{JohnstoneAos2001}
	I. M. Johnstone.  (2001).
	\newblock On the distribution of the largest eigenvalue in principal components analysis. 
	\newblock { \em Ann. Statist.,} 29(2), 295-327 \MR{1863961}
	
	
\bibitem{JohnstoneLu}
	I. M. Johnstone and A.Y. Lu. (2009). 
	\newblock On consistency and sparsity for principal components
analysis in high dimensions.
\newblock { \em J. Amer. Statist. Assoc.} 104 682-693. \MR{2751448}
		\bibitem{KoltchinskiiLouniciPCAAHP}
		V. {Koltchinskii} and K. {Lounici}. (2016).
		Asymptotics and concentration bounds for bilinear forms of spectral projectors of sample covariance. 
		\newblock { \em Ann. Inst. Henri Poincaré Probab. Stat.}, 52(4), 1976-2013. \MR{3573302}
	\bibitem{KoltchinskiiLouniciPCABernoulli}
	V. {Koltchinskii} and K. {Lounici}. (2017).
	\newblock Concentration inequalities and moment bounds for sample covariance operators.
	\newblock  { \em Bernoulli}, 23(1), 110-133. \MR{3556768}

	\bibitem{KoltchinskiiLouniciPCAAOS} )
	V. {Koltchinskii} and K. {Lounici} (2017).
	Normal approximation and concentration of spectral projectors of sample covariance
	\newblock { \em Ann. Statist.}, 45(1), 121-157. \MR{3611488}
	\bibitem{KoltchinskiiLouniciPCAarxiv}
	V. {Koltchinskii} and K. {Lounici} (2017).
	\newblock New asymptotic results in principal component analysis. \newblock { \em Sankhya A  } 79(2): 254-297
	
	\bibitem{KoltchinskiiLofflerNickl}
	V. Koltchinskii, M. L\"offler and R. Nickl. (2017). 
	\newblock Efficient Estimation of Linear Functionals of Principal Components
	\newblock {\em Preprint at } \url{https://arxiv.org/abs/1708.076429}
	

%	\bibitem{LilaAstonSangalli}
%	E. Lila, J.A.D Aston, and L.M. Sangalli. (2016).
%	\newblock Smooth Principal Component Analysis over two-dimensional manifolds with an application to Neuroimaging.
%	\newblock { \em{Ann. Appl. Stat.}}, to appear
	

	
	\bibitem{Nadler2008}
	B. Nadler. (2008).
	\newblock Finite sample approximation results for principal component analysis: a
matrix perturbation approach. 
\newblock {\em Ann. Statist.,} 36(6), 2791-2817. \MR{2485013}

	\bibitem{NaumovSpokoinyUlyanov}
	A. Naumov, V. Spokoiny and V. Ulyanov. (2017).
	\newblock Bootstrap confidence sets for spectral projectors of sample covariance.
	\newblock {\em Preprint at} \url{https://arxiv.org/pdf/1703.00871.pdf}

	\bibitem{PaulStatisticaSinica}
	D. Paul. (2007).
	\newblock Asymptotics of Sample Eigenstructure for a Large Dimensional Spiked Covariance Model.
	\newblock {\em Statistica Sinica.}, 17(4), 1617-1642. \MR{2399865}

	
%	\bibitem{Portnoy1988}
%	S. Portnoy. (1988). 
%	\newblock Asymptotic behavior of likelihood methods for exponential %families when the number of parameters tends to infinity. 
%	\newblock \emph{Ann. Statist.}, 16(1), 356-366. \MR{MR0924876}
	
		
%	\bibitem{RamsaySilverman}
%	J.O. Ramsay and B.W. Silverman. (2005).
%	\newblock Functional Data Analysis. 
%	\newblock  { \em Springer Series in Statistics.} Springer, New York. \MR{2168993}
	
	\bibitem{ReissWahl}
	M. Reiss and M. Wahl. (2016).
	\newblock Non-asymptotic upper bounds for the reconstruction error of PCA.
	\newblock { \em Preprint at } \url{https://arxiv.org/abs/1609.03779}
	
	\bibitem{SilinSpokoiny}
	I. Silin and V. Spokoiny. (2017).
	\newblock Bayesian inference for spectral projectors of covariance matrix
	\newblock { \em Preprint at } \url{https://arxiv.org/abs/1711.11532}

	

%	\bibitem{Vershynin}
%	Vershynin, R. (2012) \newblock Introduction to the non-asymptotic analysis of random matrices. \newblock In: { \em Compressed Sensing, Theory and Applications.} Chapter 5, pp. 210-268, Cambridge University Press. \MR{2963170}
	\bibitem{VuLei2013}
	V. Vu and J. Lei. (2013).
	\newblock Minimax sparse principal subspace estimation in high dimensions.
\newblock { \em Ann. Statist.}, 41(6), 2905-2947. \MR{161452}

	\bibitem{Wald}
	A. Wald. (1943).
	\newblock Tests of Statistical Hypotheses Concerning Several Parameters When the Number of Observations is Large.
	\newblock { \em Transactions of the American Mathematical Society.}, 54(3), 426-482.
	\bibitem{WangBerthetSamworth}
	T. {Wang}, Q. {Berthet} and R.J {Samworth}. (2016).
	\newblock Statistical and Computational Trade-offs in estimation of sparse principal components.
	\newblock {\em Ann. Statist.}, 44(5), 1896-1930. \MR{3546438}
	
	\bibitem{WangFan}
	W. Wang and J. Fan. (2017).
	\newblock Asymptotics of empirical eigenstructure for high dimensional spiked covariance.
	\newblock {\em Ann. Statist.}, 45(3), 1342-1374. \MR{MR3662457}
		
%	\bibitem{YuWangSamworth}
%	Y. Yu, T. Wang and R.J. Samworth. (2015)
%	\newblock{A useful variant of the Davis–Kahan theorem for statisticians.}
%	\newblock {\em Biometrika}. 102(2), 315-323. \MR{33710069}

	



\end{thebibliography}
\end{document}